\theoremstyle{plain}
\newtheorem{theorem}{Theorem}[section]
\newtheorem{lemma}[theorem]{Lemma}
\newtheorem{corollary}[theorem]{Corollary}
\newtheorem{conjecture}[theorem]{Conjecture}
\theoremstyle{definition}
\newtheorem{definition}[theorem]{Definition}
\newtheorem{exampletemp}[theorem]{Example}
\newtheorem{remark}[theorem]{Remark}
\DeclareMathOperator{\tr}{tr}
\DeclareMathOperator{\Ext}{Ext}
\newenvironment{example}{\begin{exampletemp}}{\hfill\qed\end{exampletemp}}
\begin{document}

\title{MF traces and the Cuntz semigroup}
\author{Christopher Schafhauser}
\address{Department of Pure Mathematics, University of Waterloo, 200 University Avenue West, Waterloo, ON, Canada, N2L 3G1}
\email{cschafhauser@uwaterloo.ca}
\date{\today}
\keywords{MF traces, Crossed Products, K-theoretic dynamics, Cuntz semigroup, AI-algebras}

\begin{abstract}
  A trace $\tau$ on a separable C$^*$-algebra $A$ is called \emph{matricial field} (MF) if there is a trace-preserving morphism $A \rightarrow \mathcal{Q}_\omega$, where $\mathcal{Q}_\omega$ denotes the norm ultrapower of the universal UHF-algebra $\mathcal{Q}$.  In general, the trace $\tau$ induces a state on the Cuntz semigroup $\mathrm{Cu}(A)$.  We show there is always a state-preserving morphism $\mathrm{Cu}(A) \rightarrow \mathrm{Cu}(\mathcal{Q}_\omega)$.

  As an application, if $A$ is an AI-algebra and $F$ is a free group acting on $A$, then every trace on $A \rtimes_\lambda F$ is MF.  This further implies the same result when $A$ is an AH-algebra with the ideal property such that $\mathrm{K}_1(A)$ is a torsion group.  We also use this to characterize when $A \rtimes_\lambda F$ is MF (i.e.\ admits an isometric morphism into $\mathcal{Q}_\omega$) for many simple, nuclear C$^*$-algebras $A$.
\end{abstract}

\maketitle

\section{Introduction}\label{sec:Introduction}

A C$^*$-algebra $A$ is called \emph{matricial field} (MF) if there is a net of linear, *-preserving maps $\varphi_n : A \rightarrow \mathbb{M}_{k(n)}$ where
\[ \| \varphi_n(ab) - \varphi_n(a)\varphi_n(b) \| \rightarrow 0 \quad \text{and} \quad \|\varphi_n(a)\| \rightarrow \|a\| \]
for all $a, b \in A$.  The MF-algebras were introduced by Blackadar and Kirchberg in \cite{BlackadarKirchberg:NF} where it was shown such algebras admit a certain ``generalized inductive limit'' decomposition in terms of finite dimensional algebras.  When $A$ is further assumed to be simple and nuclear, a more refined version of this decomposition result is possible which has been crucial in Elliott's Classification Program.  More precisely, a separable, simple, nuclear, MF C$^*$-algebras is the closed union of an increasing sequence of nuclear, residually finite dimensional C$^*$-subalgebras (see \cite{BlackadarKirchberg:StrongNF}).

The MF property is still poorly understood and, despite having several examples of such algebras, there are very few techniques for verifying a given C$^*$-algebra if MF.  The only obvious obstruction, and possible the only obstruction, is that if $A$ is MF, then $\tilde{A} \otimes \mathcal{K}$ does not contain an infinite projection, where $\tilde{A}$ is the unitization of $A$.  Recently, Tikuisis, White, and Winter have shown in \cite{TikuisisWhiteWinter} that every separable, nuclear C$^*$-algebra which satisfies the Universal Coefficient Theorem (UCT) and admits a faithful trace is MF (an alternate proof is given in \cite{Schafhauser:TWW}).  In the non-nuclear setting, even less is known.

A tracial state (hereafter referred to as a trace) on a C$^*$-algebra $A$ is called matricial field if one can find linear, *-preserving maps $\varphi_n : A \rightarrow \mathbb{M}_{k(n)}$ which approximately preserve the multiplication and the trace in the same sense as above; note, however, the $\varphi_n$ are not required to be approximately isometric.  There are no traces which are known not to be MF, and, as with the isometric version, there are very few tools for verifying a trace is MF aside from the Tikuisis-White-Winter Theorem mentioned above.  Note that if every trace is MF, then Connes's Embedding Problem, which asks for the same type of approximations in the 2-norm, has a positive solution.  In a sense, the MF-trace problem is a uniform version of Connes's Embedding Problem.

For crossed products of C$^*$-algebras, slightly more is known, and, moreover, the existence of MF-approximations can often be detected in the dynamics.  For example, a classical result of Pimsner in \cite{Pimsner:AFEmbedding} characterizes when a crossed product $C(X) \rtimes_\lambda \mathbb{Z}$ is MF in terms of the induced action of $\mathbb{Z}$ on $X$.  Later, N.\ Brown has shown in \cite{Brown:AFEmbedding} that if $A$ is an AF-algebra, the MF-property for $A \rtimes_\lambda \mathbb{Z}$ is characterized by the induced action of $\mathbb{Z}$ on $\mathrm{K}_0(A)$.  Results of this nature for were obtained by Matui for actions of $\mathbb{Z}$ on simple, unital A$\mathbb{T}$-algebras of real rank zero and by H.\ Lin in \cite{Lin:AFEmbedding} for actions of $\mathbb{Z}$ on AH-algebras with a faithful invariant trace.

After the integers, the next easiest class of groups has turned out to be non-abelian free groups.  A deep result of Haagerup and Thorbj{\o}rnsen \cite{HaagerupThorbjornsen} shows that if $F$ is a free group, then the group algebra $\mathrm{C}^*_\lambda(F)$ is MF.  Using this result, Kerr and Nowak have shown in \cite{KerrNowak} that if $A$ is a commutative AF-algebra and $F$ is a free group acting on $A$ (including $F = \mathbb{Z}$), the MF-property of $A \rtimes_\lambda F$ is characterized in terms of the induced action of $F$ on the Gelfand spectrum of $A$.  Rainone has extended this result to possibly non-commutative AF-algebras $A$ in \cite{Rainone:AFByFree} with the characterization given in terms of the induced action of $F$ on $\mathrm{K}_0(A)$.

In Rainone's work in \cite{Rainone:AFByFree}, a new technique was used.  The MF approximations were first constructed at the level of K-theory and then lifted to the algebra using the classification of AF-algebras.  This idea was later refined and formalized by Rainone and the author in \cite{RainoneSchafhauser}.  The idea is essentially to show that if $A \rtimes_\lambda F$ is stably finite, then there is a faithful approximate morphism $\mathrm{K}_0(A \rtimes_\lambda F) \rightarrow \mathrm{K}_0(\mathbb{M}_{k(n)})$.  From here, one uses the classification of AF-algebras to lift the compositions
\[ \mathrm{K}_0(A) \rightarrow \mathrm{K}_0(A \rtimes_\lambda F) \rightarrow \mathrm{K}_0(\mathbb{M}_{k(n)}) \]
to approximate covariant representations $(\varphi_n, u_n) : (A, F) \rightarrow \mathbb{M}_{k(n)}$.  These, in turn, induce approximate morphisms $\varphi_n \rtimes_\lambda u_n : A \rtimes_\lambda F \rightarrow \mathbb{M}_{k(n)} \otimes \mathrm{C}^*_\lambda(F)$ given by $a \lambda_s \mapsto \varphi_n(a) u_n(s) \otimes \lambda_s$.  Composing these maps with MF-approximations for $\mathrm{C}^*_\lambda(F)$ produces the MF-approximations of $A \rtimes_\lambda F$.

The same technique works for traces on crossed products and with the help of deeper classification results, extends far beyond the class of AF-algebras.  The following is the main result of \cite{RainoneSchafhauser}.

\begin{theorem}[Rainone, Schafhauser]\label{thm:RainoneSchafhauser}
Suppose $A$ is an AH-algebra with real rank zero and $F$ is a free group (possibly $\mathbb{Z}$) acting on $A$.  The following are equivalent:
\begin{enumerate}
  \item $A \rtimes_\lambda F$ is MF;
  \item $A \rtimes_\lambda F$ is stably finite;
  \item $\displaystyle \mathrm{span}_\mathbb{Z} \{ x - s\cdot x : x \in \mathrm{K}_0(A), s \in F \} \cap \mathrm{K}_0^+(A) = \{0\}$.
\end{enumerate}
Moreover, every trace on $A \rtimes_\lambda F$ is MF.
\end{theorem}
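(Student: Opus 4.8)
The plan is to adapt the K\extdash theoretic lifting strategy recalled in the introduction, with the two assertions — the equivalence $(1)\Leftrightarrow(2)\Leftrightarrow(3)$ and the MF\extdash ness of every trace — handled in parallel, the only difference being whether the maps produced are required to be approximately isometric or merely approximately trace\extdash preserving. Write $B = A \rtimes_\lambda F$. The implication $(1)\Rightarrow(2)$ is immediate, since an MF algebra is stably finite: no matrix amplification of $\tilde B$ can contain an infinite projection. For $(2)\Leftrightarrow(3)$ I would compute the ordered group $\mathrm{K}_0(B)$ using a Pimsner\extdash Voiculescu\extdash type exact sequence for crossed products by free groups; this exhibits $\mathrm{K}_0(B)$, as far as the order on projections is concerned, in terms of the coinvariants $\mathrm{K}_0(A)/\,\mathrm{span}_\mathbb{Z}\{x - s\cdot x : x \in \mathrm{K}_0(A), s\in F\}$ (the $\mathrm{K}_1$ contribution being torsion\extdash free and irrelevant to positivity). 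Using that $A$ has real rank zero — so that the order is determined by projections and finiteness is detected by $\mathrm{K}_0$, in the spirit of Brown's analysis of AF crossed products by $\mathbb{Z}$ — one then identifies stable finiteness of $B$ with the absence of a nonzero positive coboundary in $\mathrm{K}_0(A)$, i.e.\ with condition (3).

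The substantial implication is $(3)\Rightarrow(1)$. Assuming (3), the first step is a purely algebraic one about the ordered group $G = \mathrm{K}_0(A)$ equipped with its $F$\extdash action: when the only positive coboundary is $0$, then for every finitely generated subgroup $G_0 \subseteq G$, finite $F_0 \subseteq F$ and $\epsilon > 0$ there is a $k$ and a pointed, positive, order\extdash embedding map $G_0 \to \mathbb{Z}^{k} = \mathrm{K}_0(\mathbb{M}_{k})$ which is approximately covariant for the permutation action of the symmetric group $S_{k}$ on $\mathbb{Z}^{k}$; heuristically, freeness of $F$ together with the no\extdash positive\extdash coboundary hypothesis lets one ``spread $G_0$ out'' over a large finite set permuted by $F_0$. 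This is the combinatorial core of Rainone's method. The second step is to \emph{lift}: invoking the classification of AH\extdash algebras of real rank zero (existence and uniqueness theorems in terms of the full invariant, due to Lin and Elliott\extdash Gong), one realizes these approximate covariant representations of the invariant by approximate covariant representations $(\varphi_n, u_n) : (A, F) \to \mathbb{M}_{k(n)}$ of the C$^*$\extdash algebra, with $\mathrm{Ad}(u_n(s)) \circ \varphi_n \approx \varphi_n \circ s$ on finite sets. The third step is the assembly already described: form $\varphi_n \rtimes_\lambda u_n : B \to \mathbb{M}_{k(n)} \otimes \mathrm{C}^*_\lambda(F)$, $a\lambda_s \mapsto \varphi_n(a) u_n(s) \otimes \lambda_s$, and compose with the MF approximations of $\mathrm{C}^*_\lambda(F)$ supplied by Haagerup\extdash Thorbj\o rnsen, tensored with the identity on $\mathbb{M}_{k(n)}$. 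Since $\mathbb{M}_{k(n)} \otimes \mathrm{C}^*_\lambda(F)$ is MF and $\varphi_n \rtimes_\lambda u_n$ is approximately isometric on $B$ — this is where the order\extdash embedding strengthening of the $\mathrm{K}_0$\extdash level map is used — the composite gives the desired MF approximations of $B$.

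For the trace statement one runs the same machine with ``approximately isometric'' replaced everywhere by ``approximately $\tau$\extdash preserving,'' which removes the need for hypothesis (3). Given a trace $\tau$ on $B$, it restricts to an $F$\extdash invariant trace $\tau_A$ on $A$, inducing an $F$\extdash invariant state $\mu$ on $\mathrm{K}_0(A)$. Even without the no\extdash positive\extdash coboundary condition, invariance of $\mu$ is precisely what lets it descend to the coinvariants, so one can find approximate covariant representations $(G_0, F_0) \to (\mathbb{Z}^{k}, S_{k})$ whose induced state on $\mathbb{Z}^{k}$ approximates $\mu$ — here only approximate positivity and trace\extdash matching are needed, not injectivity. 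Lifting via the real rank zero classification, now in the version that keeps track of traces, produces approximately covariant $(\varphi_n, u_n) : (A, F) \to \mathbb{M}_{k(n)}$ with $\tr_{k(n)} \circ \varphi_n \approx \tau_A$; composing $\varphi_n \rtimes_\lambda u_n$ with the Haagerup\extdash Thorbj\o rnsen maps, which preserve the canonical trace on $\mathrm{C}^*_\lambda(F)$ up to $\epsilon$, yields trace\extdash preserving approximate morphisms $B \to \mathbb{M}_{k(n)}$, so $\tau$ is MF. (One could also route the argument through $\mathrm{Cu}(B)$ rather than $\mathrm{K}_0(B)$, but for real rank zero AH algebras ordinary K\extdash theory already carries the needed information.)

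The step I expect to be the main obstacle is the lifting in both arguments: converting an approximate covariant representation of the Elliott invariant into an approximate covariant representation of the algebra. For AF algebras this is essentially immediate from Elliott's classification, but in the real rank zero AH setting it requires the more technical uniqueness theorems phrased in terms of total K\extdash theory, and one must check that the approximate $\mathrm{K}_0$\extdash data can be realized while simultaneously controlling $\mathrm{K}_1$ and the relevant $\Ext$ and total\extdash K pieces, so that the uniqueness theorem applies and delivers genuine approximate covariance rather than mere approximate $\mathrm{K}_0$\extdash covariance. A secondary, but still nontrivial, difficulty is the purely order\extdash theoretic input — producing the approximate covariant representations $(G_0, F_0) \to (\mathbb{Z}^{k}, S_{k})$ from condition (3), respectively from the invariant state $\mu$ — which is where the freeness of $F$ is genuinely exploited.
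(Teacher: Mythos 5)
This theorem is not proved in the paper at all: it is quoted as the main result of \cite{RainoneSchafhauser}, with its mechanism sketched in the introduction and reproduced, in Cuntz-semigroup form, in the proof of Theorem \ref{thm:ANCCW1CrossedProducts}. Measured against that mechanism, your proposal has a genuine gap at its central step, the one you call the ``combinatorial core.'' You propose to build approximately covariant maps $(G_0,F_0)\to(\mathbb{Z}^k,S_k)$ and then lift covariance through classification; but the uniqueness theorems you would invoke compare two $*$-homomorphisms $\varphi$ and $\varphi\circ\alpha_s$ and require that they have the \emph{same} invariant, i.e.\ you need the $\mathrm{K}_0$-level morphism to be genuinely $F$-invariant, and no amount of ``approximate covariance for a permutation action'' (a notion you never make precise, and note $\mathrm{K}_0(\mathbb{M}_k)\cong\mathbb{Z}$, not $\mathbb{Z}^k$) substitutes for this. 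The actual argument obtains invariance for free by factoring through the crossed product: one produces a faithful (resp.\ trace-compatible) morphism $\mathrm{K}_0(A\rtimes_\lambda F)\to\mathrm{K}_0(\mathcal{Q}_\omega)$ and composes with $\mathrm{K}_0(\iota)$; since $\iota\circ\alpha_s=\mathrm{Ad}(\lambda_s)\circ\iota$ and inner automorphisms act trivially on K-theory, the composite is exactly invariant, the uniqueness half of classification then gives unitaries implementing each $\alpha_{s_j}$ (exactly, after reindexing in $\mathcal{Q}_\omega$), and freeness of $F$ enters only because the implementing unitaries need satisfy no relations, together with exactness and MF-ness of $\mathrm{C}^*_\lambda(F)$ (Haagerup--Thorbj{\o}rnsen) and, for traces, de la Harpe--Skandalis. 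So your ``spread $G_0$ over a permuted finite set'' step is both unformulated and misplaced, and your claim that this is where freeness is exploited misidentifies where the group structure actually matters.

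Two further problems. First, your trace argument only ever approximates traces of the form $\tau_A\circ\mathbb{E}$: composing $\varphi_n\rtimes u_n$ with trace-preserving approximations of $\mathrm{C}^*_\lambda(F)$ kills the Fourier coefficients at $s\neq 1$. For non-abelian $F$ this is fine, but only after invoking Theorem \ref{thm:HarpeSkandalis}, which you do not; for $F=\mathbb{Z}$ it is simply false that all traces factor through $\mathbb{E}$ (already $C(\mathbb{T})=\mathbb{C}\rtimes\mathbb{Z}$ has point evaluations), and both \cite{RainoneSchafhauser} and this paper treat $\mathbb{Z}$ by a separate argument (here via quasidiagonality/Tikuisis--White--Winter, Theorem \ref{thm:locallyTypeICrossedProduct}). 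Second, your route to $(2)\Leftrightarrow(3)$ via a Pimsner--Voiculescu computation of the \emph{ordered} $\mathrm{K}_0(A\rtimes_\lambda F)$ is unjustified: the exact sequence does not compute the positive cone, and stable finiteness is not read off coinvariants. What is actually needed is only $(2)\Rightarrow(3)$, which has a direct proof: a nonzero positive coboundary is the class of a nonzero projection that becomes $0$ in $\mathrm{K}_0(A\rtimes_\lambda F)$ (again because the automorphisms become inner), and a nonzero projection with trivial class produces an infinite projection in the stabilization, contradicting stable finiteness. With those repairs your outline would align with the cited proof, but as written the covariance step and the $F=\mathbb{Z}$ trace case do not go through.
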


The real rank zero assumption in Theorem \ref{thm:RainoneSchafhauser} is essential if one expect to relate the MF-property of the crossed product to the $\mathrm{K}_0$-group.  For instance, if $A = C[0, 1]$ and $F = \mathbb{Z}$, then $\mathrm{K}_0(A) = \mathbb{Z}$ and hence there are no non-trivial actions of $F$ on $\mathrm{K}_0(A)$.  On the other hand, there are many actions of $F$ on $A$ for which the crossed product is not MF as can be seen from Pimsner's characterization of MF crossed products in \cite{Pimsner:AFEmbedding}.  The issue here is that $A \otimes \mathcal{K}$ has no non-trivial projections and hence the $\mathrm{K}_0$-group contains very little information about the dynamics.

To work around the real rank zero assumption, we apply the above argument with the Cuntz semigroup $\mathrm{Cu}(A)$ in place of $\mathrm{K}_0(A)$.  As the Cuntz semigroup is constructed using positive elements in $A \otimes \mathcal{K}$, the Cuntz semigroup contains rich information about the dynamics even in the absence of real rank zero.  The following is the key technical result.  Let $\mathcal{Q} = \bigotimes_n \mathbb{M}_n$ denote the universal UHF-algebra and let $\mathcal{Q}_\omega$ denote the norm ultrapower of $\mathcal{Q}$ with respect to a free ultrafilter $\omega$ on the natural numbers.

\begin{theorem}\label{thm:ConcreteCuMFApproximations}
Suppose $A$ is a separable, unital C*-algebra and $\tau$ is a trace on $A$.  There is a unital $\mathrm{Cu}$-morphism $\sigma : \mathrm{Cu}(A) \rightarrow \mathrm{Cu}(\mathcal{Q}_\omega)$ such that $\mathrm{Cu}(\mathrm{tr}_{\mathcal{Q}_\omega}) \circ \sigma = \mathrm{Cu}(\tau)$.
\end{theorem}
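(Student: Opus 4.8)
The plan is to construct $\sigma$ by first defining it on a countable dense portion of $\mathrm{Cu}(A)$ and then extending by continuity, exploiting that $\mathrm{Cu}(\mathcal{Q}_\omega)$ is a very rich receptacle. Since $A$ is separable, $\mathrm{Cu}(A)$ is countably based; moreover, since $A$ is unital, whenever $x' \ll x$ in $\mathrm{Cu}(A)$ one has $x' \leq k[1_A]$ for some $k \in \mathbb{N}$ — if $x = [a]$, then $x' \leq [(a - \varepsilon)_+]$ for some $\varepsilon > 0$, and $[(a - \varepsilon)_+] \leq k[1_A]$ for a finite-dimensional approximant of $a$ within $\varepsilon$ — so $\mathrm{Cu}(\tau)(x') \leq k < \infty$. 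One may therefore fix a countable subsemigroup $B \subseteq \mathrm{Cu}(A)$, closed under addition, containing $[1_A]$, all of whose elements have finite $\mathrm{Cu}(\tau)$-value, and which forms a basis: $\ll$-dense ($x = \sup\{b \in B : b \ll x\}$ for all $x$) and interpolative ($b \ll x$ with $b \in B$ yields $b \ll b' \ll x$ with $b' \in B$). The goal then reduces to producing a map $\sigma_0 \colon B \to \mathrm{Cu}(\mathcal{Q}_\omega)$ preserving $0$, addition, the order and the relation $\ll$, sending $[1_A]$ to $[1_{\mathcal{Q}_\omega}]$, and with $\mathrm{Cu}(\mathrm{tr}_{\mathcal{Q}_\omega})(\sigma_0(b)) = \mathrm{Cu}(\tau)(b)$ for all $b$; one then sets $\sigma(x) := \sup\{\sigma_0(b) : b \in B,\ b \ll x\}$. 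Interpolativity of $B$ makes $\sigma$ well defined and supremum-preserving; preservation of $\ll$ is automatic because $x' \ll x$ forces $x' \leq b \ll x$ for some $b \in B$, whence $\sigma(x') \leq \sigma_0(b) \ll \sigma(x)$; and $\mathrm{Cu}(\mathrm{tr}_{\mathcal{Q}_\omega})(\sigma(x)) = \sup_{b \ll x}\mathrm{Cu}(\tau)(b) = \mathrm{Cu}(\tau)(x)$ by lower semicontinuity, even when the common value is $\infty$.

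Building $\sigma_0$ uses two features of the UHF ultrapower. First, $\mathcal{Q}_\omega \otimes \mathcal{K}$ contains a projection of every trace value in $[0,\infty)$: an ultralimit along $\omega$ of rational-trace projections in $\mathcal{Q} \otimes \mathcal{K}$ realizes any prescribed (possibly irrational) value, and because the canonical trace $\mathrm{tr}_{\mathcal{Q}_\omega}$ is not faithful, this includes nonzero projections of trace $0$. Second, $\mathcal{Q}_\omega$ has the divisibility one expects of a UHF ultrapower, so a given projection can be split into subprojections of any admissible traces. Together these let one realize, for each finite subconfiguration of $B$ — a finite set of elements with their mutual $\leq$, $\ll$ and additive relations and their prescribed finite $\mathrm{Cu}(\tau)$-values — a compatible image in $\mathrm{Cu}(\mathcal{Q}_\omega)$; the only subtle relations are $\ll$-pairs $x \ll y$ with $\mathrm{Cu}(\tau)(x) = \mathrm{Cu}(\tau)(y)$, which one accommodates by sending $x$ and $y$ to the same projection (legitimate, as $\sigma_0$ need not be injective) or by adjoining a trace-zero projection. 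Enumerating $B = \{b_1, b_2, \dots\}$ and extending coherently stage by stage — equivalently, this can be packaged as realizing the whole countable trace-labelled Cuntz diagram as a type in the countably saturated algebra $\mathcal{Q}_\omega$, with the finite-stage realizations above serving as finite satisfiability — yields $\sigma_0$.

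The main obstacle is the coherence of this construction of $\sigma_0$. When a new element $b_{n+1}$ is adjoined, its image is partly forced (it must equal $\sigma_0(b_i) + \sigma_0(b_j)$ whenever $b_{n+1} = b_i + b_j$) and partly constrained (it must dominate $\sigma_0(b_i)$ for $b_i \ll b_{n+1}$, lie $\ll$-below $\sigma_0(b_j)$ for $b_{n+1} \ll b_j$, and carry trace $\mathrm{Cu}(\tau)(b_{n+1})$), and one must verify that these requirements are jointly satisfiable and remain so at every later stage; thus the inductive hypothesis must carry a fully coherent partial $\mathrm{Cu}$-morphism rather than a bare list of values, and one needs a sufficiently precise handle on order and addition in $\mathrm{Cu}(\mathcal{Q}_\omega)$ — on how projections of prescribed traces (rational, irrational, or zero) can be simultaneously nested, summed and split — to keep the extension going. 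The reduction in the first paragraph to elements of finite trace is exactly what removes the potential trouble at $\infty$, where the complicated ideal structure of $\mathcal{Q}_\omega$ would otherwise intervene, and makes this handle attainable.
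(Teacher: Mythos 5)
Your reduction to a countable basis is fine: the observation that every element compactly contained in something is dominated by $k\langle 1_A\rangle$ (hence has finite $\mathrm{Cu}(\tau)$-value), and the extension $\sigma(x)=\sup\{\sigma_0(b): b\in B,\ b\ll x\}$ with the routine checks of $+,\leq,\ll,\sup$ and the state, all work. The genuine gap is exactly where you say it is: the construction of $\sigma_0$ itself, which you describe as ``the main obstacle'' and then assert, rather than prove, to be attainable. The difficulty is not cosmetic. In $\mathcal{Q}_\omega$ the trace $\mathrm{tr}_\omega$ is far from faithful at the level of $\mathrm{K}_0$: two projections with the same trace need not be Murray--von Neumann equivalent (e.g.\ $p$ and $p\oplus q$ with $q\neq 0$ of trace zero), so ``choose a projection of trace $\mathrm{Cu}(\tau)(b)$ for each $b$'' carries no additivity whatsoever, and divisibility of $\mathcal{Q}_\omega$ does not repair this. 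What your stage-by-stage/saturation scheme must actually deliver is a coherent, additive, order-preserving, unital assignment of projection classes realizing a countable family of possibly irrationally related trace values; this is precisely the statement that the countable subgroup $G\subseteq\mathbb{R}$ generated by the values $\mathrm{Cu}(\tau)(B)$ admits a positive, unital group homomorphism $G\to\mathrm{K}_0(\mathcal{Q}_\omega)$ splitting $\mathrm{K}_0(\mathrm{tr}_\omega)$. That is a nontrivial theorem (Theorem 4.8 of \cite{RainoneSchafhauser}), and your proposal neither invokes it nor supplies a substitute: the claimed finite satisfiability and the preservation of coherence when adjoining $b_{n+1}$ are exactly the content of that result and are left unverified.

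Two further remarks. First, once one has such a trace-splitting section $s:G\to\mathrm{K}_0^+(\mathcal{Q}_\omega)$, your entire labelled-diagram machinery is unnecessary: one may simply put $\sigma_0(b):=s(\mathrm{Cu}(\tau)(b))$, i.e.\ let $\sigma_0$ factor through the trace. Additivity and order-preservation follow from $s$ being a positive homomorphism, preservation of $\ll$ is automatic because the images are compact (projection classes), and the unit and state are preserved; there is no need to distinguish elements of equal trace or to adjoin trace-zero projections. Second, for comparison, the paper avoids working with a basis of non-compact elements altogether: it passes through the GNS representation, chooses a separable real rank zero algebra $B$ with $\pi_\tau(A)\subseteq B\subseteq\pi_\tau(A)''$, notes that $\mathrm{Cu}(B)$ is then algebraic and countably based, defines the map on the compact elements of $\mathrm{Cu}(B)$ by factoring through the trace via the $\mathrm{K}_0$-section above, and extends to all of $\mathrm{Cu}(B)$ by the Antoine--Perera--Thiel extension theorem. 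So your outline can likely be completed, but only after either proving the $\mathrm{K}_0(\mathcal{Q}_\omega)$ splitting result you are implicitly using or citing it; as written, the heart of the proof is missing.
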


Recall an \emph{AI-algebra} is a sequential direct limit of algebra $C[0, 1] \otimes A_n$ for finite dimensional C$^*$-algebras $A_n$.  Following the same strategy as outlined before Theorem \ref{thm:RainoneSchafhauser} above, the Ciuperca-Elliott classification of AI-algebras together with Theorem \ref{thm:ConcreteCuMFApproximations} produces the following result.

\begin{theorem}\label{thm:AICrossedProducts}
If $A$ is an AI-algebra and $F$ is a free group acting on $A$, then every trace on $A \rtimes_\lambda F$ is MF.
\end{theorem}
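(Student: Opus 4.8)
The plan is to transpose the argument of \cite{RainoneSchafhauser} from $\mathrm{K}$-theory to the Cuntz semigroup: build the needed map on $\mathrm{Cu}(A)$, lift it to a $*$-homomorphism on $A$ via the Ciuperca--Elliott classification of AI-algebras, promote this to a covariant representation using freeness of $F$, and then descend to the crossed product; Theorem~\ref{thm:ConcreteCuMFApproximations} is exactly the input that makes the first step possible. We may assume $A$ is unital, so $B := A \rtimes_\lambda F$ is unital, and we may assume $F$ is non-abelian: when $F = \mathbb{Z}$, $B$ is nuclear and lies in the UCT class, so every trace on $B$ is MF by \cite{TikuisisWhiteWinter} (see also \cite{Schafhauser:TWW}).

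Fix a trace $\tau$ on $B$. Since $F$ is free non-abelian it is $\mathrm{C}^*$-simple, so $\tau$ factors through the canonical conditional expectation $E : B \to A$; write $\tau = \tau_0 \circ E$ with $\tau_0 := \tau|_A$ an $F$-invariant trace on $A$. Apply Theorem~\ref{thm:ConcreteCuMFApproximations} to $(B,\tau)$ and compose the resulting unital $\mathrm{Cu}$-morphism with $\mathrm{Cu}(\iota)$, $\iota : A \hookrightarrow B$, to get a unital $\mathrm{Cu}$-morphism $\psi : \mathrm{Cu}(A) \to \mathrm{Cu}(\mathcal{Q}_\omega)$ with $\mathrm{Cu}(\mathrm{tr}_{\mathcal{Q}_\omega}) \circ \psi = \mathrm{Cu}(\tau_0)$; since inner automorphisms act trivially on $\mathrm{Cu}$ and $\iota \circ \alpha_s = \mathrm{Ad}(\lambda_s) \circ \iota$, we have $\psi \circ \mathrm{Cu}(\alpha_s) = \psi$ for all $s \in F$. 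By the Ciuperca--Elliott classification — applicable because $\mathcal{Q}_\omega$ has stable rank one — $\psi$ lifts to a unital $*$-homomorphism $\varphi : A \to \mathcal{Q}_\omega$ with $\mathrm{Cu}(\varphi) = \psi$, unique up to approximate unitary equivalence; as a trace on an AI-algebra is determined by the functional it induces on the Cuntz semigroup, $\mathrm{tr}_{\mathcal{Q}_\omega} \circ \varphi = \tau_0$. For each $s$ we have $\mathrm{Cu}(\varphi \circ \alpha_s) = \psi = \mathrm{Cu}(\varphi)$, so $\varphi \circ \alpha_s$ and $\varphi$ are approximately unitarily equivalent; since $A$ is separable and $\mathcal{Q}_\omega$ is countably saturated, there are unitaries $w_s \in \mathcal{Q}_\omega$ with $\varphi \circ \alpha_s = \mathrm{Ad}(w_s) \circ \varphi$ exactly. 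Freeness of $F$ is used crucially here: prescribing $w_s$ only on a free generating set and extending multiplicatively produces a \emph{group homomorphism} $w : F \to U(\mathcal{Q}_\omega)$ still satisfying $\varphi \circ \alpha_s = \mathrm{Ad}(w_s) \circ \varphi$, i.e.\ an honest covariant representation of $(A, F, \alpha)$ in $\mathcal{Q}_\omega$.

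Since $\mathrm{Ad}\,w$ is inner, the $F$-equivariant map $\varphi$ descends to a $*$-homomorphism on reduced crossed products
\[ \Psi : B = A \rtimes_\lambda F \longrightarrow \mathcal{Q}_\omega \rtimes_{\mathrm{Ad}\,w,\lambda} F \;\cong\; \mathcal{Q}_\omega \otimes_{\min} \mathrm{C}^*_\lambda(F), \qquad a\lambda_s \longmapsto \varphi(a)\,w_s \otimes \lambda_s. \]
Give the target the trace $\mathrm{tr}_{\mathcal{Q}_\omega} \otimes \tau_F$, where $\tau_F$ is the canonical trace on $\mathrm{C}^*_\lambda(F)$; then $\mathrm{tr}_{\mathcal{Q}_\omega} \circ \varphi = \tau_0$ and $\tau_F(\lambda_s) = \delta_{s,e}$ give $(\mathrm{tr}_{\mathcal{Q}_\omega} \otimes \tau_F) \circ \Psi = \tau_0 \circ E = \tau$. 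By \cite{HaagerupThorbjornsen} $\mathrm{C}^*_\lambda(F)$ is MF, and since it has a unique trace its MF approximations are asymptotically trace-preserving, yielding a trace-preserving $*$-homomorphism $\theta : \mathrm{C}^*_\lambda(F) \to \mathcal{Q}_\omega$. The composite
\[ B \xrightarrow{\;\Psi\;} \mathcal{Q}_\omega \otimes_{\min} \mathrm{C}^*_\lambda(F) \xrightarrow{\;\mathrm{id}\otimes\theta\;} \mathcal{Q}_\omega \otimes_{\min} \mathcal{Q}_\omega \longrightarrow (\mathcal{Q} \otimes \mathcal{Q})_\omega \;\cong\; \mathcal{Q}_\omega \]
(the third arrow being the canonical map, which is a $*$-homomorphism because evaluation maps are $*$-homomorphisms and $\otimes_{\min}$ is functorial) is then a trace-preserving $*$-homomorphism $(B,\tau) \to (\mathcal{Q}_\omega, \mathrm{tr}_{\mathcal{Q}_\omega})$, so $\tau$ is MF.

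The step I expect to be the main obstacle is the application of the Ciuperca--Elliott classification with $\mathcal{Q}_\omega$ as codomain: one must confirm that $\mathcal{Q}_\omega$ has the regularity needed for both existence and uniqueness of the lift — notably stable rank one — and that traces on AI-algebras really are recorded faithfully by their Cuntz-semigroup functionals. The remaining ingredients — $\mathrm{C}^*$-simplicity of free groups, the saturation argument upgrading approximate to exact unitary equivalence, and the crossed-product and tensor-product bookkeeping with the unique traces on $\mathrm{C}^*_\lambda(F)$ and $\mathcal{Q}$ — are standard, but need to be fitted together with care.
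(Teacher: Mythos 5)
Your proposal follows essentially the same route as the paper: dispose of $F=\mathbb{Z}$ via Tikuisis--White--Winter, use de la Harpe--Skandalis to factor an arbitrary trace through the expectation, apply Theorem~\ref{thm:ConcreteCuMFApproximations} to $A\rtimes_\lambda F$ and restrict along $\mathrm{Cu}(\iota)$, lift by Ciuperca--Elliott, and upgrade approximate to exact unitary equivalence by saturation to get a covariant representation in $\mathcal{Q}_\omega$. (For the AI case you correctly skip the tensoring by $\mathcal{Q}$ that the paper needs only for the more general Theorem~\ref{thm:ANCCW1CrossedProducts}.) Two steps, however, are not airtight as written. First, in the $F=\mathbb{Z}$ case you apply \cite{TikuisisWhiteWinter} to an arbitrary trace on $B=A\rtimes_\lambda\mathbb{Z}$, but that theorem concerns \emph{faithful} traces; for a non-faithful $\tau$ one must pass to the quotient by the trace-kernel ideal and then verify that the quotient still satisfies the UCT, which is not automatic for quotients. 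The paper's Theorem~\ref{thm:locallyTypeICrossedProduct} does exactly this, using that quotients of locally Type I algebras are locally Type I (hence UCT by Dadarlat) and that the quotient of the crossed product is again a crossed product by $\mathbb{Z}$.

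Second, your final composite relies on a canonical $*$-homomorphism $\mathcal{Q}_\omega\otimes_{\min}\mathcal{Q}_\omega\rightarrow(\mathcal{Q}\otimes\mathcal{Q})_\omega$, justified by ``evaluation maps are $*$-homomorphisms and $\otimes_{\min}$ is functorial.'' That argument gives a contractive map $\ell^\infty(\mathcal{Q})\otimes_{\min}\ell^\infty(\mathcal{Q})\rightarrow(\mathcal{Q}\otimes\mathcal{Q})_\omega$, but for it to descend to $\mathcal{Q}_\omega\otimes_{\min}\mathcal{Q}_\omega$ one needs the kernel of $q_\omega\otimes_{\min}q_\omega$ to equal the closed ideal generated by $\ker q_\omega$ in each factor, i.e.\ exactness of a factor --- and $\ell^\infty(\mathcal{Q})$ is not exact, nor is $\mathcal{Q}_\omega$. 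This is precisely the point at which exactness of $\mathrm{C}^*_\lambda(F)$ must enter (e.g.\ via a c.p.\ lifting of $\theta$ to maps into $\mathcal{Q}$ before tensoring), and it is exactly what the paper's black box, Theorem~\ref{thm:MFDynamics} (Theorem~3.9 of \cite{RainoneSchafhauser}), is designed to encapsulate. Both soft spots are repairable simply by invoking Theorems~\ref{thm:locallyTypeICrossedProduct} and~\ref{thm:MFDynamics} rather than re-deriving them; the Cuntz-semigroup core of your argument matches the paper's proof.
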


For actions of $\mathbb{Z}$, the above theorem can be deduced from the Tikuisis-White-Winter Theorem.  In fact, the result holds for crossed products of locally Type I algebras by free abelian groups (see Theorem \ref{thm:locallyTypeICrossedProduct}).  For arbitrary free groups $F$, the above theorem also holds assuming only that $A \otimes \mathcal{Q}$ is an AI-algebra and hence holds whenever $A$ is an AH-algebra with the ideal property and torsion $\mathrm{K}_1$-group (Corollary \ref{cor:AHCrossedProducts} below).  Using a technical refinement of Theorem \ref{thm:AICrossedProducts} (Theorem \ref{thm:ANCCW1CrossedProducts} below) together with recent progress in classification, it is in fact possible to characterize the MF property for crossed products of certain simple, nuclear C$^*$-algebras by free groups (Corollary \ref{cor:SimpleNuclearCrossedProducts} below).

At this point, we are not able to characterize when the crossed products themselves are MF.  The same techniques would work provided one could show that for every separable, stably finite C$^*$-algebra $A$, there is a faithful $\mathrm{Cu}$-morphism $\mathrm{Cu}(A) \rightarrow \mathrm{Cu}(\mathcal{Q}_\omega)$.  This is probably within reach, however we have been unable to resolve this question.  In the case of minimal actions and, in particular, in the case when $A$ is simple, more can be said since every invariant trace on $A$ is faithful (see Corollary \ref{cor:ANCCW1CrossedProducts}).

In Section 2, we recall some terminology and results from \cite{RainoneSchafhauser} on MF approximations of dynamical systems and crossed products and also recall some definitions related to Cuntz semigroups.  Theorem \ref{thm:ConcreteCuMFApproximations} is proven in Section 3 along with the analogous statement for abstract Cuntz semigroups.  The final section, Section 4, contains our main result (Theorem \ref{thm:ANCCW1CrossedProducts}) of which Theorem \ref{thm:AICrossedProducts} is a special case.  Some consequences of this result of a related nature are also discussed.

\section{Preliminaries}\label{sec:Preliminaries}

\subsection{MF Dynamics}\label{sec:MFDynamics}

In this section, we recall some facts about MF algebras, MF traces, and the dynamical versions of these concepts.  More details can be found in \cite{RainoneSchafhauser}.

Let $A$ be a C$^*$-algebra.  We say $A$ is \emph{matricial field} (MF) if for every finite set $\mathcal{F} \subseteq A$ and for every $\varepsilon > 0$, there is an integer $k \geq 1$ and a linear, *-preserving map $\varphi : A \rightarrow \mathbb{M}_k$ such that
\[ \| \varphi(ab) - \varphi(a) \varphi(b) \| < \varepsilon \quad \text{and} \quad \| \varphi(a) \| \geq \| a \| - \varepsilon \]
for all $a \in \mathcal{F}$.  Similarly, we say a trace $\tau$ on $A$ is \emph{MF} if for every finite set $\mathcal{F} \subseteq A$ and for every $\varepsilon > 0$, there is an integer $k \geq 1$ and a linear, *-preserving map $\varphi : A \rightarrow \mathbb{M}_k$ such that
\[ \| \varphi(ab) - \varphi(a) \varphi(b) \| < \varepsilon \quad \text{and} \quad | \mathrm{tr}_k(\varphi(a)) - \tau(a) | < \varepsilon \]
for all $a \in \mathcal{F}$.  If $A$ is a unital MF-algebra, then $A$ admits an MF trace.  Conversely, if $A$ admits a faithful MF trace, then $A$ is MF.

Let $\mathcal{Q} = \bigotimes_{n=1}^\infty \mathbb{M}_n$ denote the universal UHF-algebra.  For a free ultrafilter $\omega$ on the natural numbers, define
\[ \mathcal{Q}_\omega = \{ (q_n)_{n=1}^\infty \subseteq \mathcal{Q} : \sup_n \|q_n\| < \infty \} / \{ (q_n)_{n=1}^\infty \subseteq \mathcal{Q} : \lim_{n \rightarrow \omega} \| q_n \| = 0 \}. \]
We will often identify an element $q \in \mathcal{Q}_\omega$ with a bounded sequence $(q_n) \subseteq \mathcal{Q}$ representing it.  The trace on each $\mathbb{M}_n$ induces a trace $\mathrm{tr}_{\mathcal{Q}}$ on $\mathcal{Q}$ which in turn induces a trace $\mathrm{tr}_\omega$ on $\mathcal{Q}_\omega$ via $\mathrm{tr}_\omega(q) = \lim_{n \rightarrow \omega} \tr_{\mathcal{Q}}(q_n)$.

A separable C$^*$-algebra $A$ is MF if, and only if, there is a faithful *-homomorphism $\varphi : A \rightarrow \mathcal{Q}_\omega$.  Similarly, a trace $\tau$ on a separable C$^*$-algebra $A$ is MF if, and only if, there is a *-homomorphism $\varphi : A \rightarrow \mathcal{Q}_\omega$ such that $\mathrm{tr}_\omega \circ \varphi = \tau$.

A dynamical system is a triple $(A, G, \alpha)$, where $A$ is a C$^*$-algebra, $G$ is a discrete group, and $\alpha : G \rightarrow \mathrm{Aut}(G)$ is a group homomorphism.  A \emph{covariant representation} of a dynamical system $(\varphi, u) : (A, G, \alpha) \rightarrow B$ consists of a C$^*$-algebra $B$, a *-homomorphism $\varphi : A \rightarrow B$, and a group homomorphism $u : G \rightarrow U(B)$, where $U(B)$ denotes the unitary group of $B$, such that $u_s \varphi(a) u_s^* = \varphi(\alpha_s(a))$ for all $a \in A$.

A dynamical system $(A, G, \alpha)$ is called \emph{MF} if there is a covariant representation $(\varphi, u) : (A, G, \alpha) \rightarrow \mathcal{Q}_\omega$ such that $\varphi$ is faithful.  A trace $\tau$ is called $\alpha$-\emph{MF} if there is a covariant representation $(\varphi, u) : (A, G, \alpha) \rightarrow \mathcal{Q}_\omega$ such that $\mathrm{tr}_\omega \circ \varphi = \tau$.  Note that an $\alpha$-MF trace is necessarily $\alpha$-invariant in the sense that $\tau \circ \alpha_s = \tau$ for all $s \in G$.

For a group $G$, we let $\mathrm{tr}_G$ denote the trace on the reduced group algebra $\mathrm{C}^*_\lambda(G)$ determined by $\tr_G(\lambda_s) = 0$ for $s \neq 1$.  For a C$^*$-dynamical system $(A, G, \alpha)$, we let $\mathbb{E} : A \rtimes_\lambda G \rightarrow A$ denote the expectation on the reduced crossed product $A \rtimes_\lambda G$ determined by $\mathbb{E}(\lambda_s) = 0$ for $s \neq 1$.  Note that if $\tau$ is an invariant trace on $A$, then $\tau \circ \mathbb{E}$ is a trace on $A \rtimes_\lambda G$.

The following result is Theorem 3.9 in \cite{RainoneSchafhauser}.

\begin{theorem}[Rainone, Schafhauser]\label{thm:MFDynamics}
Let $(A, G, \alpha)$ be a dynamical system such that $\mathrm{C}^*_\lambda(G)$ is exact.
\begin{enumerate}
  \item If $(A, G, \alpha)$ is MF and $\mathrm{C}^*_\lambda(G)$ is MF, then $A \rtimes_\lambda G$ is MF.
  \item If $\tau$ is an $\alpha$-MF trace on $A$ and $\mathrm{tr}_G$ is MF, then $\mathbb{E} \circ \tau$ is MF.
\end{enumerate}
\end{theorem}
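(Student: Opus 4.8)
The plan is to handle both parts with a single construction and then split according to whether we must control norms (part (1)) or only the trace (part (2)). Fix the covariant representation $(\varphi, u) : (A, G, \alpha) \to \mathcal{Q}_\omega$ supplied by the hypothesis ($\varphi$ faithful in part (1); $\mathrm{tr}_\omega \circ \varphi = \tau$ in part (2)), and define
\[ \Phi : A \rtimes_\lambda G \to \mathcal{Q}_\omega \otimes_{\min} \mathrm{C}^*_\lambda(G), \qquad \Phi(a \lambda_s) = \varphi(a) u_s \otimes \lambda_s. \]
First I would check that $\Phi$ is a well-defined $*$-homomorphism. The pair $(\varphi(\cdot)\otimes 1,\, u_s \otimes \lambda_s)$ is a covariant representation of $(A, G, \alpha)$ into $\mathcal{Q}_\omega \otimes_{\min} \mathrm{C}^*_\lambda(G)$, so its integrated form is defined on the full crossed product; because the unitaries $s \mapsto u_s \otimes \lambda_s$ carry the regular representation $s \mapsto \lambda_s$ as their second tensor leg, Fell's absorption principle shows this integrated form factors through $A \rtimes_\lambda G$ and, represented on $H \otimes \ell^2(G)$, realizes exactly the regular representation. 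In particular $\Phi$ is isometric whenever $\varphi$ is faithful.

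For part (1), faithfulness of $\varphi$ makes $\Phi$ embed $A \rtimes_\lambda G$ into $\mathcal{Q}_\omega \otimes_{\min} \mathrm{C}^*_\lambda(G)$. Since $A \rtimes_\lambda G$ is separable, its image lies in $B_0 \otimes_{\min} \mathrm{C}^*_\lambda(G)$ for some separable $B_0 \subseteq \mathcal{Q}_\omega$, and every separable subalgebra of $\mathcal{Q}_\omega$ is MF. The crux is then a tensor lemma: if $B$ is MF and $C$ is exact and MF, then $B \otimes_{\min} C$ is MF. I would prove this by unpacking the embeddings $B, C \hookrightarrow \mathcal{Q}_\omega$ into finite-dimensional approximate morphisms $\sigma_n : B \to \mathbb{M}_{k(n)}$ and $\theta_n : C \to \mathbb{M}_{l(n)}$ and forming $\sigma_n \otimes \theta_n : B \otimes_{\min} C \to \mathbb{M}_{k(n)l(n)}$; these are approximately multiplicative and $*$-preserving, and exactness of $C$ is precisely what forces them to remain approximately isometric for the \emph{minimal} tensor norm. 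Applying this with $C = \mathrm{C}^*_\lambda(G)$ (exact and MF by hypothesis) shows $B_0 \otimes_{\min} \mathrm{C}^*_\lambda(G)$ is MF, and since MF passes to subalgebras, $A \rtimes_\lambda G$ is MF.

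For part (2), $\Phi$ is only a $*$-homomorphism, but that suffices. I would compute the pullback of the product trace $\mathrm{tr}_\omega \otimes \mathrm{tr}_G$: since $\mathrm{tr}_G(\lambda_s) = \delta_{s,1}$ and $\mathrm{tr}_\omega \circ \varphi = \tau$,
\[ (\mathrm{tr}_\omega \otimes \mathrm{tr}_G)(\Phi(a\lambda_s)) = \mathrm{tr}_\omega(\varphi(a) u_s)\,\mathrm{tr}_G(\lambda_s) = \delta_{s,1}\,\tau(a) = (\tau \circ \mathbb{E})(a \lambda_s), \]
so $(\mathrm{tr}_\omega \otimes \mathrm{tr}_G) \circ \Phi = \tau \circ \mathbb{E}$ on the dense $*$-subalgebra, hence everywhere. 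It remains to see that $\mathrm{tr}_\omega \otimes \mathrm{tr}_G$ is an MF trace on $B_0 \otimes_{\min} \mathrm{C}^*_\lambda(G)$ and that MF traces pull back along $*$-homomorphisms; the latter is immediate from the ultrapower characterization by composing the realizing map with $\Phi$. For the former I would use the trace analogue of the tensor lemma: if $\rho_B, \rho_C$ are MF traces, then $\rho_B \otimes \rho_C$ is an MF trace on $B \otimes_{\min} C$, proved by the same tensoring of finite-dimensional approximations, but now testing only the functionals $\mathrm{tr}_{k(n)} \otimes \mathrm{tr}_{l(n)} = \mathrm{tr}_{k(n)l(n)}$ against a finite set, so no norm estimate — and hence no exactness — is required. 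Taking $\rho_B = \mathrm{tr}_\omega|_{B_0}$ (realized by the inclusion $B_0 \hookrightarrow \mathcal{Q}_\omega$) and $\rho_C = \mathrm{tr}_G$ yields that $\tau \circ \mathbb{E}$ is MF.

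The main obstacle is concentrated in part (1): showing that the tensored approximations $\sigma_n \otimes \theta_n$ stay approximately isometric for the \emph{minimal} tensor norm, which is exactly where exactness of $\mathrm{C}^*_\lambda(G)$ is indispensable — without it the tensored maps could underestimate the norm — together with the clean identification of $\Phi$ with the regular representation so that $\|\Phi(\cdot)\|$ is the reduced-crossed-product norm. By contrast, part (2) only ever compares traces, where the product trace on matrices is computed exactly and the norm difficulty evaporates; this is why the trace statement requires neither faithfulness of $\varphi$ nor exactness in its tensor step.
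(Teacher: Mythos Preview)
The paper does not give a proof of this statement: it is quoted verbatim as Theorem~3.9 of \cite{RainoneSchafhauser} and left without argument. Your outline is precisely the standard strategy behind that result---build the twisted covariant pair $(\varphi(\cdot)\otimes 1,\, u\otimes\lambda)$, use Fell absorption to identify the integrated form with the regular representation (giving well-definedness on the reduced crossed product and isometry when $\varphi$ is faithful), and finish with a tensor-product permanence lemma for MF algebras/traces. So at the level of architecture your proposal is correct and matches the intended proof.

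One point deserves more care. In both parts you propose to tensor the finite-dimensional approximations $\sigma_n\otimes\theta_n$ directly; but these are merely linear, $*$-preserving, approximately multiplicative maps, and there is no a priori reason the algebraic tensor $\sigma_n\otimes\theta_n$ extends boundedly to $B\otimes_{\min}C$---that is a completely-bounded condition you have not secured. The clean way around this is to work with the $*$-homomorphisms $\psi_B:B\to\mathcal{Q}_\omega$ and $\psi_C:C\to\mathcal{Q}_\omega$ (which tensor unproblematically on the minimal tensor product) and then use exactness of $C=\mathrm{C}^*_\lambda(G)$ to identify $\mathcal{Q}_\omega\otimes_{\min}C$ with $(\ell^\infty(\mathcal{Q})\otimes_{\min}C)/(c_\omega(\mathcal{Q})\otimes_{\min}C)$, from which one passes into $(\mathcal{Q}\otimes_{\min}C)_\omega$. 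This is where exactness does real work, and it is needed in part~(2) as well: your assertion that ``no exactness is required'' in the trace case is not justified by the argument you give, since producing a well-defined map on $B\otimes_{\min}C$ is still a norm question even if the \emph{conclusion} you want to verify only concerns the trace. Since exactness of $\mathrm{C}^*_\lambda(G)$ is a standing hypothesis of the theorem, this does not damage your proof---just drop the parenthetical claim and route both tensor lemmas through the ultrapower/exactness argument rather than through bare approximate morphisms.
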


For our purposes, we will only be concerned with free groups.  The following result is of fundamental importance in this paper.

\begin{theorem}
If $F$ is a free group, then $\mathrm{C}^*_\lambda(F)$ is exact and MF and $\mathrm{tr}_F$ is MF.
\end{theorem}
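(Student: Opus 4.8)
\emph{Proof strategy.} The idea is to reduce all three assertions to the single case $F = F_2$, the free group on two generators $g_1, g_2$, and then to invoke the random matrix estimates of Haagerup and Thorbj{\o}rnsen \cite{HaagerupThorbjornsen}. For the reduction, recall that every countable free group embeds into $F_2$ (in particular $\mathbb{Z} = \langle g_1 \rangle \leq F_2$), and that a subgroup inclusion $H \leq G$ induces an isometric $*$-homomorphism $\iota : \mathrm{C}^*_\lambda(H) \hookrightarrow \mathrm{C}^*_\lambda(G)$ which is trace-preserving, since $\mathrm{tr}_G(\lambda_s) = 0 = \mathrm{tr}_H(\lambda_s)$ for every $s \neq 1$. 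Thus, if $F$ is countable, fix an embedding $F \hookrightarrow F_2$: exactness of $\mathrm{C}^*_\lambda(F)$ follows because it is a $*$-subalgebra of the exact algebra $\mathrm{C}^*_\lambda(F_2)$; and, given a faithful $*$-homomorphism $\pi : \mathrm{C}^*_\lambda(F_2) \to \mathcal{Q}_\omega$ with $\mathrm{tr}_\omega \circ \pi = \mathrm{tr}_{F_2}$, the composition $\pi \circ \iota$ is faithful and satisfies $\mathrm{tr}_\omega \circ (\pi \circ \iota) = \mathrm{tr}_F$, so $\mathrm{C}^*_\lambda(F)$ is MF and $\mathrm{tr}_F$ is MF. For an uncountable free group $F$ one reduces to the countable case by a routine locality argument: every finite subset of $\mathrm{C}^*_\lambda(F)$ is approximated by finite linear combinations of the $\lambda_s$ with $s$ ranging over a finite set $S$, hence lies approximately in $\mathrm{C}^*_\lambda(\langle S \rangle)$ for the finitely generated, hence countable, free subgroup $\langle S \rangle \leq F$, and $\mathrm{C}^*_\lambda(\langle S \rangle) \hookrightarrow \mathrm{C}^*_\lambda(F)$ isometrically and trace-preservingly; exactness likewise passes through the corresponding directed union.

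It remains to establish the three assertions for $F = F_2$. Exactness of $\mathrm{C}^*_\lambda(F_2)$ is classical, for instance because $F_2$ acts topologically amenably on the Gromov boundary of its Cayley tree. For the MF assertions, I would take a sequence of independent Haar-distributed random unitaries $u_1^{(n)}, u_2^{(n)} \in \mathbb{M}_{k(n)}$ with $k(n) \to \infty$. Voiculescu's asymptotic freeness theorem gives, for every noncommutative $*$-polynomial $p$, that $\mathrm{tr}_{k(n)}(p(u_1^{(n)}, u_2^{(n)})) \to \mathrm{tr}_{F_2}(p(\lambda_{g_1}, \lambda_{g_2}))$ almost surely, and the strong convergence theorem of Haagerup and Thorbj{\o}rnsen \cite{HaagerupThorbjornsen} gives $\| p(u_1^{(n)}, u_2^{(n)}) \| \to \| p(\lambda_{g_1}, \lambda_{g_2}) \|$ almost surely. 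Fixing a realization for which both hold for every $p$ in a fixed countable dense set, and embedding each $\mathbb{M}_{k(n)}$ unitally and trace-preservingly into $\mathcal{Q}$, the assignment $\lambda_{g_i} \mapsto (u_i^{(n)})_n$ extends to a $*$-homomorphism $\pi : \mathrm{C}^*_\lambda(F_2) \to \mathcal{Q}_\omega$ which is isometric and satisfies $\mathrm{tr}_\omega \circ \pi = \mathrm{tr}_{F_2}$; this yields simultaneously that $\mathrm{C}^*_\lambda(F_2)$ is MF and that $\mathrm{tr}_{F_2}$ is MF. (Alternatively, once $\mathrm{C}^*_\lambda(F_2)$ is known to be MF one can avoid asymptotic freeness: the MF trace the algebra then carries must coincide with $\mathrm{tr}_{F_2}$ by Powers' theorem that $\mathrm{C}^*_\lambda(F_2)$ is simple with a unique tracial state.)

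The one genuinely hard ingredient is the Haagerup--Thorbj{\o}rnsen strong convergence theorem, which I would cite as a black box rather than reprove; the remaining inputs --- exactness of free groups, Voiculescu's asymptotic freeness, the embedding $F \hookrightarrow F_2$, and the locality reduction --- are standard.
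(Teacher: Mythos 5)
Your proposal is correct, and it leans on the same hard ingredient as the paper (Haagerup--Thorbj{\o}rnsen \cite{HaagerupThorbjornsen}), but the surrounding argument is organized differently. The paper treats $F = \mathbb{Z}$ separately (commutativity), quotes exactness from Proposition 5.1.8 of \cite{BrownOzawa} and MF-ness from \cite{HaagerupThorbjornsen}, and then gets that $\mathrm{tr}_F$ is MF with a one-line soft argument: a unital MF algebra admits some MF trace, and $\mathrm{C}^*_\lambda(F)$ has a unique trace (Corollary VII.7.6 of \cite{Davidson:C*Book}), so that trace must be $\mathrm{tr}_F$ --- this is exactly your parenthetical alternative. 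Your primary route instead builds a single isometric, trace-preserving embedding $\mathrm{C}^*_\lambda(F_2) \to \mathcal{Q}_\omega$ from random matrices, combining asymptotic freeness (for the trace) with strong convergence (for the norm); this is more constructive and makes the simultaneity of ``faithful'' and ``trace-preserving'' explicit, at the cost of one attribution slip: \cite{HaagerupThorbjornsen} proves strong convergence for independent GUE matrices, and the Haar-unitary version you quote is a later theorem of Collins and Male. This does not break your argument --- you can either take the generators to be suitable continuous functions of independent GUE matrices (strong convergence passes to continuous functional calculus, and such functions of free semicirculars are free Haar unitaries), or simply cite the corollary in \cite{HaagerupThorbjornsen} that $\mathrm{C}^*_\lambda(F_2)$ is MF and fall back on your unique-trace alternative, which is the paper's proof verbatim. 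Your reduction of a general free group to $F_2$ via a trace-preserving subgroup inclusion, with the locality argument for uncountable rank, is a nice bonus: it handles $F = \mathbb{Z}$ and uncountable free groups uniformly, whereas the paper deals with $\mathbb{Z}$ as a separate (commutative) case and is silent about separability issues for uncountable rank.
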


\begin{proof}
If $F = \mathbb{Z}$, the result follows since $\mathrm{C}^*_\lambda(\mathbb{Z})$ is commutative.  Assume $F$ is non-abelian.  By Proposition 5.1.8 in \cite{BrownOzawa}, C$^*_\lambda(\mathbb{F}_r)$ is exact.  A result of Haagerup and Thorbj{\o}rnsen \cite{HaagerupThorbjornsen} shows $\mathrm{C}^*_\lambda(F)$ is MF.  Hence $\mathrm{C}^*_\lambda(F)$ admits an MF trace.  As $\mathrm{C}^*_\lambda(F)$ has a unique trace (see Corollary VII.7.6 in \cite{Davidson:C*Book}), we have that $\mathrm{tr}_F$ is MF.
\end{proof}

\subsection{The Cuntz Semigroup}\label{sec:CuPreliminaries}

We recall the definition of and a few results about Cuntz semigroups which will be needed in Section 3.  The modern definition of Cuntz semigroups (Definition \ref{defn:CuntzSemigroup} below) was introduced in \cite{CowardElliottIvanescu}.  The theory has been extensively developed in recent years.  See, for example, \cite{AntoinePereraThiel:CuRegularity} and \cite{AntoinePereraThiel:BiCu}.

\begin{definition}
A \emph{positively ordered monoid} is a monoid $M$ together with a partial order $\leq$ such that
\begin{enumerate}
  \item if $x \in M$, then $0 \leq x$, and
  \item if $x, x', y, y' \in M$, $x \leq x'$, and $y \leq y'$, then $x + y \leq x' + y'$.
\end{enumerate}
\end{definition}

\begin{definition}
Let $(M, \leq)$ be a partially ordered set.  Given an $x, y \in M$, we say $x$ is \emph{compactly contained in} $y$, written $x \ll y$, if whenever $(y_n) \subseteq M$ is an increasing sequence in $M$ such that the supremum exists and $y \leq \sup_n y_n$, there is an $n \in \mathbb{Z}_+$ such that $x \ll y_n$.

A sequence $(y_n)_n \subseteq M$ is called \emph{rapidly increasing} if $y_n \ll y_{n+1}$ for all $n \in \mathbb{Z}_+$.
\end{definition}

\begin{definition}\label{defn:CuntzSemigroup}
A \emph{Cuntz semigroup} is a positively ordered monoid $S$ such that
\begin{enumerate}
  \item every increasing sequence in $S$ has a supremum,
  \item every element of $S$ is the supremum of a rapidly increasing sequence,
  \item if $(x_n)$ and $(y_n)$ are increasing sequences in $S$, then
  \[ \sup_n (x_n + y_n) = (\sup_n x_n) + (\sup_n y_n), \]
  and
  \item if $x, x', y, y' \in S$, $x \ll x'$, and $y \ll y'$, then $x + y \ll x' + y'$.
\end{enumerate}
A $\mathrm{Cu}$-morphism $\sigma : S \rightarrow T$ between Cuntz semigroups $S$ and $T$ is a function preserving $0, +, \leq, \ll$, and $\mathrm{sup}$.
\end{definition}

We will need a few extra properties of Cuntz semigroups.  First we need a few more definitions.

\begin{definition}
Let $S$ be a Cuntz semigroup.  A \emph{basis} for $S$ is a set $B \subseteq M$ such that for every $x \in S$, there is an increasing sequence in $B$ with supremum $x$.  We call $S$ \emph{countably based} if it admits a countable basis.
\end{definition}

\begin{definition}
Let $S$ be a Cuntz semigroup.  An element in $x \in S$ is called \emph{compact} if $x \ll x$.  We say $S$ is \emph{algebraic} if the submonoid $S_c \subseteq S$ consisting of compact elements forms a basis for $S$.
\end{definition}

Note that if $(x_n)$ is an increasing sequence with compact supremum, then $(x_n)$ is eventually constant.  This shows that if $B$ is a basis for $S$, then $S_c \subseteq B$ and, in particular, if $S$ is countably based, then $S_c$ is countable.

The following result follows from Proposition 5.5.4 in \cite{AntoinePereraThiel:CuRegularity}.

\begin{theorem}[Antoine, Perera, Thiel]\label{thm:ExtendingFromCompactSubmonoid}
Suppose $S$ and $T$ are Cuntz semigroups and $S$ is algebraic.  Any ordered monoid morphism $S_c \rightarrow T$ extends uniquely to a $\mathrm{Cu}$-morphism $S \rightarrow T$.
\end{theorem}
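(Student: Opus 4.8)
The plan is to construct the extension $\sigma$ directly. Since $S$ is algebraic, every $x \in S$ is the supremum of an increasing sequence $(x_n)$ in $S_c$; writing $\varphi \colon S_c \to T$ for the given ordered monoid morphism, I would define $\sigma(x) := \sup_n \varphi(x_n)$. This makes sense: $\varphi$ is order preserving, so $(\varphi(x_n))_n$ is increasing in $T$, and $T$ is a Cuntz semigroup, so the supremum exists. The first point to settle is independence of the chosen sequence. The workhorse, used throughout, is that a compact element below a supremum already lies below one of the terms: if $a \in S_c$ and $a \leq \sup_n y_n$ with $(y_n)$ increasing, then since $a$ is compact we have $a \ll a$, and the definition of $\ll$ applied to the sequence $(y_n)$ yields $a \leq y_n$ for some $n$. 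Feeding two increasing sequences $(x_n), (y_m) \subseteq S_c$ with common supremum $x$ into this gives $\varphi(x_n) \leq \sup_m \varphi(y_m)$ for every $n$, hence $\sup_n \varphi(x_n) \leq \sup_m \varphi(y_m)$, and the reverse inequality by symmetry; thus $\sigma$ is well defined, and taking a constant sequence shows $\sigma|_{S_c} = \varphi$ and in particular $\sigma(0) = 0$.

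Next I would verify that $\sigma$ is a $\mathrm{Cu}$-morphism. For additivity: if $x = \sup_n x_n$ and $y = \sup_n y_n$ with $x_n, y_n \in S_c$, then $x + y = \sup_n(x_n + y_n)$ by axiom (3) of a Cuntz semigroup, the elements $x_n + y_n$ again lie in the submonoid $S_c$, and axiom (3) applied in $T$ gives $\sigma(x+y) = \sup_n(\varphi(x_n) + \varphi(y_n)) = \sigma(x) + \sigma(y)$. Order preservation and preservation of suprema of increasing sequences both reduce to the workhorse above: for instance, if $(z^{(k)})_k$ increases to $z$ and $z = \sup_n a_n$ with $a_n \in S_c$, then each $a_n$ lies below some $z^{(k)}$, so $\varphi(a_n) \leq \sup_k \sigma(z^{(k)})$ and hence $\sigma(z) \leq \sup_k \sigma(z^{(k)})$; the reverse inequality is immediate from order preservation.

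The step I expect to be the main obstacle is that $\sigma$ preserves compact containment, and this is where algebraicity is genuinely used. Suppose $x \ll y$ in $S$ and write $y = \sup_m b_m$ with $(b_m) \subseteq S_c$ increasing (note such a sequence is automatically rapidly increasing: since each $b_m$ is compact and $b_m \leq b_{m+1}$, we have $b_m \ll b_{m+1}$). The definition of $\ll$ applied to $(b_m)$ produces an $m_0$ with $x \ll b_{m_0}$, so $x \leq b_{m_0} \leq b_{m_0+1} \leq y$ with $b_{m_0}, b_{m_0+1}$ compact. It now suffices to know that $\varphi(b_{m_0}) \ll \varphi(b_{m_0+1})$: combined with $\sigma(x) \leq \varphi(b_{m_0})$ and $\varphi(b_{m_0+1}) \leq \sup_m \varphi(b_m) = \sigma(y)$, the transitivity properties of $\ll$ then give $\sigma(x) \ll \sigma(y)$. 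Proving $\varphi(b_{m_0}) \ll \varphi(b_{m_0+1})$ is the crux, and it amounts to the assertion that a morphism out of $S_c$ carries the compact-containment relation of $S$ --- which on the compact submonoid is simply its order --- into the relation $\ll$ of $T$; this is precisely the content of Proposition 5.5.4 in \cite{AntoinePereraThiel:CuRegularity}. Finally, uniqueness is forced: any $\mathrm{Cu}$-morphism $\sigma'$ with $\sigma'|_{S_c} = \varphi$ must satisfy $\sigma'(x) = \sup_n \sigma'(x_n) = \sup_n \varphi(x_n) = \sigma(x)$ whenever $x = \sup_n x_n$ with $x_n \in S_c$.
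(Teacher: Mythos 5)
Your construction of $\sigma$ and the verifications of well-definedness, additivity, order preservation, preservation of suprema, and uniqueness are all correct (and worth recording, since the paper gives no proof beyond the citation). The genuine gap is exactly at the step you flag as the crux. The assertion that an ordered monoid morphism $\varphi : S_c \rightarrow T$ automatically satisfies $\varphi(a) \ll \varphi(b)$ whenever $a \leq b$ in $S_c$ is false, and deferring it to Proposition 5.5.4 of \cite{AntoinePereraThiel:CuRegularity} does not rescue the argument: that proposition is (a version of) the very extension theorem you are proving, so the appeal is circular, and no citation can supply a false intermediate claim. Concretely, take $S = \overline{\mathbb{N}} = \mathbb{N} \cup \{\infty\}$, an algebraic Cuntz semigroup with $S_c = \mathbb{N}$, and $T = [0, \infty]$ with its usual Cu-structure, in which $x \ll y$ if and only if $x < y$ or $x = 0$, so that the only compact element of $T$ is $0$. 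The map $\varphi : \mathbb{N} \rightarrow [0, \infty]$, $n \mapsto n$, is an ordered monoid morphism, but $1 \ll 1$ in $S$ while $\varphi(1) = 1$ is not compact in $T$; since any $\mathrm{Cu}$-morphism must carry compact elements to compact elements, no $\mathrm{Cu}$-morphism $S \rightarrow T$ restricts to $\varphi$ on $S_c$.

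What this reveals is that the theorem as stated is slightly too strong: one needs either the additional hypothesis that $\varphi$ takes values in the compact elements $T_c$ of $T$, or a weaker conclusion in which the extension is only required to preserve $0$, $+$, $\leq$, and suprema but not $\ll$. With the hypothesis $\varphi(S_c) \subseteq T_c$ in place, your crux becomes immediate --- $\varphi(b_{m_0}) \ll \varphi(b_{m_0}) \leq \varphi(b_{m_0+1})$ gives $\varphi(b_{m_0}) \ll \varphi(b_{m_0+1})$ --- and the remainder of your argument then closes the proof with no further input. Note that this stronger hypothesis does hold in the paper's only application of the theorem (Lemma \ref{lem:CuEmbeddingAlgebraicCase}), where the morphism on $S_c$ factors through $V(\mathcal{Q}_\omega)$, which is identified with the compact elements of $\mathrm{Cu}(\mathcal{Q}_\omega)$. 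So the right fix is to repair the statement, not the body of your proof.
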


\begin{definition}
An \emph{order unit} for a Cuntz semigroup $S$ is a compact element $e \in S$ such that for all $x \in S$, we have $x \leq \sup \{ n e : n \geq 1 \}$.  A pair $(S, e)$ where $S$ is a Cuntz semigroup and $e$ is an order unit will be called a \emph{unital Cuntz semigroup}.  We often suppress the order unit from the notation and refer to $S$ as a unital Cuntz semigroup.

A unital $\mathrm{Cu}$-morphism $\varphi : (S, e) \rightarrow (S', e')$ between unital Cuntz semigroups is a $\mathrm{Cu}$-morphism $\varphi : S \rightarrow S'$ such that $\varphi(e) = e'$.
\end{definition}

The definition of functional below is taken from \cite{ElliottRobertSantiago} where the theory of functionals on Cuntz semigroups is extensively developed.  We will not need anything more than the definition here.

\begin{definition}
A \emph{functional} of a Cuntz semigroup $S$ is a function $\mu : S \rightarrow [0, \infty]$ which preserves $0, +, \leq$, and $\mathrm{sup}$.  If $(S, e)$ is a unital Cuntz semigroup, a \emph{state} of $S$ is a functional $\mu$ on $S$ such that $\mu(e) = 1$.
\end{definition}

The most important example of Cuntz semigroups are defined as quotients of the positive cone of a stable C$^*$-algebra.  We recall the construction below.

\begin{example}
Given a C$^*$-algebra $A$ and positive elements $a, b \in A$, define $a \precsim b$ if there is a sequence $(x_n) \subseteq A$ such that $x_n^* b x_n \rightarrow a$ and define $a \sim b$ if $a \precsim b$ and $b \precsim a$.  The \emph{Cuntz semigroup of} $A$ is defined as the set $\mathrm{Cu}(A) = (A \otimes \mathcal{K})^+ / \sim$.  Write $\langle a \rangle$ for the class in $\mathrm{Cu}(A)$ represented by an element $a \in (A \otimes \mathcal{K})^+$.  The partial order on $\mathrm{Cu}(A)$ is induced by the relation $\precsim$ and the addition is defined by orthogonal condition; that is, $\langle a \rangle + \langle b \rangle = \langle a' + b' \rangle$ where $a'$ and $b'$ are such that $\langle a \rangle = \langle a' \rangle$, $\langle b \rangle = \langle b' \rangle$, and $a' b' = 0$.

The main result of \cite{CowardElliottIvanescu} shows that $\mathrm{Cu}(A)$ is a Cuntz semigroup for all C$^*$-algebras $A$ and that $A \mapsto \mathrm{Cu}(A)$ defines a functor from C$^*$-algebras to Cuntz semigroups.  For a *-homomorphism $\varphi : A \rightarrow B$, the map $\mathrm{Cu}(\varphi) : \mathrm{Cu}(A) \rightarrow \mathrm{Cu}(B)$ is defined by $\mathrm{Cu}(\varphi)(\langle a \rangle) = \langle (\varphi \otimes \mathrm{id}_{\mathcal{K}})(a) \rangle$ for all positive $a \in A \otimes \mathcal{K}$.

If $A$ is separable, then $\mathrm{Cu}(A)$ is countably based.  If $A$ has real rank zero, then $\mathrm{Cu}(A)$ is algebraic.  For a stably finite C$^*$-algebra $A$, there is a ordered monoid isomorphism $V(A) \rightarrow S_c$, $[p] \mapsto \langle p \rangle$, where $V(A)$ is the semigroup consisting of projections in $A \otimes \mathcal{K}$ up to Murray-von Neumann equivalence.  When $A$ is unital, the element $\langle 1_A \rangle$ is an order unit for $\mathrm{Cu}(A)$.  We will always view $\mathrm{Cu}(A)$ as a unital $\mathrm{Cu}$-semigroup with this order unit.  Every trace $\tau$ on a unital C$^*$-algebra $A$ induces a state $\mathrm{Cu}(\tau)$ on $\mathrm{Cu}(A)$ via the formula
\[ \mathrm{Cu}(\tau)(\langle a \rangle) = \lim_{n \rightarrow \infty} (\tau \otimes \mathrm{Tr})(a^{1/n}) \]
for all positive $a \in A \otimes \mathcal{K}$, where $\mathrm{Tr}$ is the semifinite tracial weight on $\mathcal{K}$ normalized so that $\mathrm{Tr}(p) = 1$ for any rank one projection $p \in \mathcal{K}$.
\end{example}

The connection between the Cuntz semigroup and Theorem \ref{thm:AICrossedProducts} is given by the Ciuperca-Elliott Theorem in \cite{CiupercaElliott} which shows $\mathrm{Cu}$ classifies morphism from AI-algebras into stable rank one algebras.  In fact, using this result, Robert has shown in \cite{Robert:NCCW1} that the same result holds for sequential direct limits of Elliott-Thomsen algebras with trivial $\mathrm{K}_1$-group.  The precise statement is given in \ref{thm:CiupercaElliottRobert} below.  First we recall the definition of Elliott-Thomsen algebras.

\begin{definition}
Suppose $A$ and $B$ be C$^*$-algebras and $\varphi_0, \varphi_1 : A \rightarrow B$ are *\-/homomorphisms.  Define
\[ M(A, B, \varphi_0, \varphi_1) = \{ (a, b) \in A \oplus C([0, 1], B) : \varphi_0(a) = b(0), \varphi_1(a) = b(1) \}. \]
An \emph{Elliott-Thomsen algebra} is a C$^*$-algebra of the form $M(A, B, \varphi_0, \varphi_1)$ where $A$ and $B$ are finite dimensional.

We let $\mathcal{C}$ denote the class of Elliott-Thomsen algebras and let $\mathcal{C}_0 \subseteq \mathcal{C}$ denote the subclass consisting of Elliott-Thomsen algebras  with trivial $\mathrm{K}_1$-group.  A C$^*$-algebra is an A$\mathcal{C}$-algebra (resp.\ an A$\mathcal{C}_0$-algebra) if it is a sequential direct limit of algebras in $\mathcal{C}$ (resp.\ $\mathcal{C}_0$).
\end{definition}

\begin{theorem}[Ciuperca, Elliott, Robert]\label{thm:CiupercaElliottRobert}
Let $A$ be a unital A$\mathcal{C}_0$-algebra and let $B$ be a unital C*-algebra with stable rank one.
\begin{enumerate}
  \item If $\sigma : \mathrm{Cu}(A) \rightarrow \mathrm{Cu}(B)$ is a unital $\mathrm{Cu}$-morphism, there is a unital *-homomorphism $\varphi : A \rightarrow B$ such that $\mathrm{Cu}(\varphi) = \sigma$.
  \item If $\varphi, \psi : A \rightarrow B$ are unital *-homomorphisms with $\mathrm{Cu}(\varphi) = \mathrm{Cu}(\psi)$, then there is a sequence of unitaries $(u_n) \subseteq B$ such that
      \[ \| u_n \varphi(a) u_n^* - \psi(a) \| \rightarrow 0 \]
      for all $a \in A$.
\end{enumerate}
\end{theorem}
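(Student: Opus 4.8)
The plan is to reduce the statement to the case where $A$ is a single building block in $\mathcal{C}_0$ and then bootstrap to arbitrary sequential limits by Elliott's approximate intertwining technique; parts (1) and (2) are proven together, since the limit argument for existence relies on the building-block form of uniqueness. Throughout, the hypothesis that $B$ has stable rank one is used in an essential way: it guarantees that $\mathrm{Cu}(B)$ has good cancellation (so the submonoid of compact elements is exactly $V(B)$ and projections with equal classes are Murray--von Neumann equivalent), and, more importantly, it allows one to rotate a positive element of $B$ by a unitary onto any element with the same Cuntz class up to arbitrarily small error --- the engine behind every approximate unitary equivalence below.

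First I would treat a building block $A = M(F_0, F_1, \varphi_0, \varphi_1) \in \mathcal{C}_0$, which sits in an extension $0 \to C_0((0,1)) \otimes F_1 \to A \to F_0 \to 0$ with $F_0, F_1$ finite dimensional and $\mathrm{K}_1(A) = 0$. Combining the description of $\mathrm{Cu}(A)$ coming from this one-dimensional NCCW structure with the known form of $\mathrm{Cu}$ of an interval algebra (lower semicontinuous $\overline{\mathbb{N}}$-valued functions on $[0,1]$ glued to the finite-dimensional data at the endpoints), one sees that a unital $\mathrm{Cu}$-morphism $\sigma : \mathrm{Cu}(A) \to \mathrm{Cu}(B)$ is determined by a compatible choice of projections in $B$ realizing the images of the compact classes from $F_0$ together with a path of positive elements in $B$ realizing the images of the interval classes. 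For \emph{existence} at this level, one selects projections in $B$ with the prescribed classes (possible since $\sigma$ maps compact elements into $V(B)$) and positive elements realizing the interval data, and joins them using the structure of $\mathrm{Cu}(B)$ for stable rank one $B$; this is the core of the Ciuperca--Elliott construction and of Robert's refinement to Elliott--Thomsen blocks with vanishing $\mathrm{K}_1$. For \emph{uniqueness} at this level, given unital $*$-homomorphisms $\varphi, \psi : A \to B$ with $\mathrm{Cu}(\varphi) = \mathrm{Cu}(\psi)$, one conjugates $\varphi$ by a unitary so that it agrees with $\psi$ on the image of $F_0$ (using equality of the induced maps on $V(B)$) and then corrects the remaining interval part by a continuous path of unitaries, controlling the error in terms of the Ciuperca--Elliott Cuntz-semigroup metric $d_W$.

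With the building-block case in hand, I would pass to a general unital A$\mathcal{C}_0$-algebra $A = \varinjlim (A_n, \iota_n)$ with each $A_n \in \mathcal{C}_0$ and each connecting map unital (after telescoping). For (1): restrict $\sigma$ to $\sigma_n : \mathrm{Cu}(A_n) \to \mathrm{Cu}(B)$ and lift each to a unital $*$-homomorphism $\varphi_n : A_n \to B$; the triangles need not commute, but $\varphi_{n+1} \circ \iota_n$ and $\varphi_n$ induce the same $\mathrm{Cu}$-morphism, hence are approximately unitarily equivalent by building-block uniqueness, and a one-sided approximate intertwining (with a diagonal argument over a dense sequence in the separable $A$ and shrinking tolerances) produces $\varphi : A \to B$ with $\mathrm{Cu}(\varphi) = \sigma$. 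For (2): if $\varphi, \psi : A \to B$ are unital with $\mathrm{Cu}(\varphi) = \mathrm{Cu}(\psi)$, then $\varphi$ and $\psi$ restricted to each $A_n$ induce equal $\mathrm{Cu}$-morphisms, hence are approximately unitarily equivalent there; a two-sided approximate intertwining then yields unitaries $u_n \in B$ with $\|u_n \varphi(a) u_n^* - \psi(a)\| \to 0$ on a dense set, hence on all of $A$.

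The main obstacle is the uniqueness statement for a single interval (or Elliott--Thomsen) building block --- equivalently, that the Cuntz semigroup with the metric $d_W$ is a complete invariant for unital $*$-homomorphisms into an \emph{arbitrary} stable rank one algebra. One must show that two positive-element-valued paths in $B$ carrying the same Cuntz data can be joined by a norm-continuous path of unitaries with quantitative control, and this is precisely where the full strength of stable rank one, together with the fine analysis of the Cuntz semigroup of such algebras (R{\o}rdam, Coward--Elliott--Ivanescu, Brown--Perera--Toms), is indispensable. Once that is in place, the endpoint matching, Robert's passage from intervals to blocks with trivial $\mathrm{K}_1$, and the two intertwining arguments all follow standard Elliott-program patterns.
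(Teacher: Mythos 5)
The paper offers no proof of this theorem: it is imported as a known result, with the AI-algebra case cited to \cite{CiupercaElliott} and the extension to A$\mathcal{C}_0$-algebras cited to \cite{Robert:NCCW1}. So there is no internal argument to compare yours against, and the only question is whether your sketch could stand on its own. As written it cannot: the two steps you describe as ``the core of the Ciuperca--Elliott construction and of Robert's refinement'' --- existence of a lift of a unital $\mathrm{Cu}$-morphism out of a single Elliott--Thomsen block, and approximate unitary equivalence of two unital $*$-homomorphisms out of such a block into an \emph{arbitrary} stable rank one $B$ --- are exactly the mathematical content of the cited theorems, and your proposal asserts them rather than proves them. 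In particular, the uniqueness step is not a matter of conjugating to agree on the finite-dimensional quotient $F_0$ and then ``correcting the interval part by a continuous path of unitaries'': a unitary matching the endpoint data need not interpolate continuously across the glued fibres, and making this work against a codomain with no structure beyond stable rank one is precisely where the $d_W$-metric analysis of \cite{CiupercaElliott} and the machinery of \cite{Robert:NCCW1} enter. A further concrete gap: Robert's classification is formulated for the augmented invariant $\mathrm{Cu}^\sim$ rather than $\mathrm{Cu}$, and deducing the unital $\mathrm{Cu}$-statement quoted here uses that, for unital morphisms out of an algebra with trivial $\mathrm{K}_1$, a unit-preserving $\mathrm{Cu}$-morphism determines compatible $\mathrm{Cu}^\sim$-data; your sketch does not address this translation.

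Your limit-stage architecture --- restrict $\sigma$ to the building blocks $A_n$, lift each restriction, and run one-sided (for existence) and two-sided (for uniqueness) approximate intertwinings over a dense sequence --- is the standard and correct way these classification theorems are assembled, and that part of the outline is fine, including the observation that only the qualitative building-block uniqueness is needed to drive the intertwining once finite sets and tolerances are chosen in advance. One loose end there: after the one-sided intertwining produces $\varphi$, you still must verify $\mathrm{Cu}(\varphi) = \sigma$, which requires an argument that the Cuntz-semigroup data is preserved under the pointwise limit of the conjugated lifts; this is handled in the cited papers and should not be taken for granted. In short, the skeleton matches the literature the paper relies on, but the theorem's actual substance is black-boxed, so the proposal is a roadmap rather than a proof.
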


In our setting, we will have an action $\alpha$ of a group $G$ on an A$\mathcal{C}_0$-algebra $A$.  To produce the dynamical MF-approximations needed in Theorem \ref{thm:MFDynamics}, we first produce a $\mathrm{Cu}$-morphism $\sigma : \mathrm{Cu}(A) \rightarrow \mathrm{Cu}(\mathcal{Q}_\omega)$ such that $\sigma \circ \mathrm{Cu}(\alpha_s) = \sigma$ for all $s \in G$.  The existence statement above is used to lift $\sigma$ to a *-homomorphism $\varphi : A \rightarrow \mathcal{Q}_\omega$ and the uniqueness statement is used to compare the *-homomorphisms $\varphi$ and $\varphi \circ \alpha_s$ for $s \in G$.

\section{MF Approximations on Cuntz Semigroups}\label{sec:CuMFApproximations}

In this section, we will prove Theorem \ref{thm:ConcreteCuMFApproximations} in the introduction.  We also prove the analogous statement for abstract Cuntz semigroups in Theorem \ref{thm:AbstractCuMFApproximations} since it does not take much more work and may be of independent interest.  For the applications to crossed products in Section \ref{sec:CrossedProducts}, Theorem \ref{thm:ConcreteCuMFApproximations} will suffice.  Both version of the theorem require handling the algebraic case first and then reducing the general case to the algebraic one.

\begin{lemma}\label{lem:CuEmbeddingAlgebraicCase}
If $\mu$ is a state on a countably based, algebraic, unital Cuntz semigroup $S$, then there is a unital $\mathrm{Cu}$-morphism $\sigma : S \rightarrow \mathrm{Cu}(\mathcal{Q}_\omega)$ such that $\mathrm{Cu}(\mathrm{tr}_\omega) \circ \sigma = \mu$.
\end{lemma}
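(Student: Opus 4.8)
The plan is to build $\sigma$ by first defining it on the compact part $S_c$ and then invoking Theorem~\ref{thm:ExtendingFromCompactSubmonoid}. Since $S$ is algebraic and countably based, $S_c$ is a countable ordered monoid which is a basis for $S$, and $e \in S_c$. The restriction $\mu|_{S_c}$ is an ordered-monoid-valued state (taking values in $[0,\infty]$, but note $\mu(ne) = n$ for all $n$, so $\mu$ is in fact finite on everything $\leq ne$; I should be slightly careful about elements of $S_c$ not dominated by any multiple of $e$, but since $e$ is an order unit this does not happen — every $x \in S_c$ satisfies $x \leq \sup_n ne$, and compactness of $x$ forces $x \leq ne$ for some $n$, hence $\mu(x) < \infty$). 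So $\mu|_{S_c} : S_c \to [0,\infty)$ is an ordered monoid morphism into the positive reals with $\mu(e) = 1$.

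The key construction is to realize the data $(S_c, \mu|_{S_c})$ inside $\mathrm{Cu}(\mathcal{Q})$-like objects at finite stages and then pass to the ultrapower. Concretely, I would enumerate $S_c = \{x_0 = 0, x_1 = e, x_2, x_3, \dots\}$ and build, for each $m$, an ordered-monoid morphism $\theta_m : \{x_0,\dots,x_m\}\text{-generated submonoid} \to \mathbb{Z}_+ \cdot \tfrac{1}{N_m}$ (a scaled copy of the naturals, sitting inside $V(\mathbb{M}_{N_m}) \cong \mathbb{Z}_+$ after identifying $\langle p\rangle \leftrightarrow \mathrm{rank}(p)$) so that $\theta_m$ approximates $\mu$ on the generators to within $1/m$ and respects the order relations among $x_0,\dots,x_m$ exactly. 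The point is that the relevant finite combinatorial data — finitely many elements of $S_c$, their additive relations, their order relations, and approximate values of $\mu$ — can be modeled by projections in a large enough matrix algebra: this is essentially a statement about embedding a finitely generated ordered abelian monoid with a state into $(\tfrac{1}{N}\mathbb{Z}_+, \le)$, which is possible because $\mathbb{Q}_+$ is a "generic" state space and the order on a finitely generated piece of $S_c$ is determined by finitely many inequalities with rational (approximate) solutions. Each such $\theta_m$ lifts to a unital ordered monoid morphism $V(\mathbb{M}_{N_m}) = S_c(\mathbb{M}_{N_m})$-valued map, hence by Theorem~\ref{thm:ExtendingFromCompactSubmonoid} (applied with $S$ replaced by the algebraic Cuntz semigroup generated, or directly since $\mathrm{Cu}(\mathbb{M}_{N_m}) = \mathbb{Z}_+$ is algebraic) to $\mathrm{Cu}$-morphisms; assembling them over $m$ and taking the class in the ultrapower gives a map into $\mathrm{Cu}(\mathcal{Q}_\omega)$. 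Here I would use that $\mathcal{Q}$ contains unital copies of every $\mathbb{M}_{N}$, so all the finite-stage maps land in $\mathcal{Q}$, and the sequence $(\theta_m)_m$ defines a map into the sequence algebra whose trace, computed via $\mathrm{tr}_\omega = \lim_{m\to\omega} \mathrm{tr}_{\mathcal{Q}}$, recovers $\mu$ exactly because the errors go to $0$.

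The main obstacle — and the step deserving the most care — is checking that the ultraproduct of the finite-stage maps is genuinely a $\mathrm{Cu}$-morphism on all of $S_c$ (preserving $+$, $\leq$, and $\ll$), not merely a map that is approximately multiplicative/order-preserving stage by stage. The resolution is that $\ll$ and $\leq$ on $S_c$ are "finitary": $x_i \leq x_j$ in $S$ with both compact is equivalent to $x_i \ll x_j$ (since $x_j \ll x_j$), and this relation, together with the finitely many additive identities $x_i + x_j = x_k$, involves only finitely many elements; so for $m$ large enough beyond the indices involved, $\theta_m$ (and hence the ultrapower map) respects it exactly. Thus the ultrapower map $\sigma_0 : S_c \to \mathrm{Cu}(\mathcal{Q}_\omega)$ is an honest ordered monoid morphism, it is unital since each $\theta_m(e)$ is a rank-one-normalized projection (i.e., $\theta_m(e) = 1$ in the scaled copy), and Theorem~\ref{thm:ExtendingFromCompactSubmonoid} extends it to a $\mathrm{Cu}$-morphism $\sigma : S \to \mathrm{Cu}(\mathcal{Q}_\omega)$. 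For the trace identity, one checks $\mathrm{Cu}(\mathrm{tr}_\omega) \circ \sigma$ and $\mu$ agree on $S_c$ — which holds by the approximation $|\theta_m(x_i) - \mu(x_i)| < 1/m$ and $\lim_{m\to\omega}$ — and since both $\mathrm{Cu}(\mathrm{tr}_\omega)\circ\sigma$ and $\mu$ are functionals (preserving suprema) and $S_c$ is a basis, they agree on all of $S$. I would also need to note that $\mathrm{Cu}(\mathcal{Q}_\omega)$ is a legitimate target — i.e., that $\mathrm{Cu}$ applied to the (non-separable) C$^*$-algebra $\mathcal{Q}_\omega$ is still a Cuntz semigroup, which is guaranteed by the main result of \cite{CowardElliottIvanescu} cited in the preliminaries — and that $\mathrm{Cu}(\mathrm{tr}_\omega)$ is the induced functional, so that the equation in the statement is meaningful.
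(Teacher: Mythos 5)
Your overall skeleton agrees with the paper's: restrict to the compact part $S_c$ (using that every $x \in S_c$ sits under some $n e$, so $\mu|_{S_c}$ is finite), build a unital, state-compatible ordered monoid morphism out of $S_c$ into the projection part of $\mathrm{Cu}(\mathcal{Q}_\omega)$, extend to $S$ by Theorem~\ref{thm:ExtendingFromCompactSubmonoid}, and verify the state identity on the basis $S_c$ and then on all of $S$ by sup-continuity. Where you differ is the construction of the map on $S_c$: the paper takes a shortcut by pushing everything through $\mu$ itself --- it observes that $\mu(S_c)$ generates a countable subgroup $G \subseteq \mathbb{R}$ and quotes Theorem~4.8 of \cite{RainoneSchafhauser}, which supplies a positive, unital, trace-compatible group morphism $G \rightarrow \mathrm{K}_0(\mathcal{Q}_\omega)$, and then uses stable rank one of $\mathcal{Q}_\omega$ to identify $\mathrm{K}_0^+(\mathcal{Q}_\omega)$ with $V(\mathcal{Q}_\omega)$ inside $\mathrm{Cu}(\mathcal{Q}_\omega)$. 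You instead rebuild that embedding by hand, via finite-stage maps into $\tfrac{1}{N_m}\mathbb{Z}_+ \cong V(\mathbb{M}_{N_m})$ and an ultraproduct argument with eventual exactness of each fixed relation. This is more self-contained (it in effect reproves the cited K-theoretic result rather than invoking it) but longer; the paper's route buys brevity at the cost of an external citation, and both exploit the same underlying fact that the ultrapower absorbs finite-stage rational approximations.

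The one step you should repair is the existence of the finite-stage maps $\theta_m$, which you assert rather than prove, and which as literally phrased is too strong: an order-preserving monoid morphism from the \emph{entire} submonoid generated by $x_0,\dots,x_m$ into $\tfrac{1}{N}\mathbb{Z}_+$ need not exist (take $S_c$ to be the positive cone of $\mathbb{Z} + \sqrt{2}\,\mathbb{Z} \subseteq \mathbb{R}$ with two generators: any rational assignment flips some order relation among larger sums). What you actually need --- and what your ultraproduct argument uses --- is only that $\theta_m$ respects the finitely many additive identities and order relations \emph{among the enumerated elements} $x_0,\dots,x_m$, together with $\theta_m(e)=1$ and $|\theta_m(x_i)-\mu(x_i)|<1/m$. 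That weaker statement is true and has a clean justification: these constraints form a finite system of linear equalities and weak inequalities with integer coefficients which the real vector $(\mu(x_i))_{i\le m}$ satisfies, so the solution set is a nonempty polyhedron defined over $\mathbb{Q}$, in which rational points are dense; choose a rational solution within $1/m$ and clear denominators to get $N_m$. With this justification inserted (and a word on why the resulting sequences of projections have uniformly bounded rank, namely $x \le n e$ forces $\theta_m(x) \le n$ eventually, so the classes genuinely live in $\mathrm{Cu}(\mathcal{Q}_\omega)$), your argument goes through.
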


\begin{proof}
Since $S$ is countably based, $S_c$ is countable, and hence $\mu(S_c) \subseteq [0, \infty]$ is countable.  Suppose $x \in S_c$.  Since $x \leq \sup \{ n \cdot e : n \geq 1 \}$ and $x$ is compact, there is an $n \geq 1$ such that $x \leq n \cdot e$ and hence $\mu(x) \leq n$.  Therefore, $\mu(S_c) \subseteq [0, \infty)$ and, in particular, $\mu(S_c)$ generates a countable subgroup of $G \subseteq \mathbb{R}$.  By Theorem 4.8 in \cite{RainoneSchafhauser}, there is a positive, unital group morphism $\sigma_0 : G \rightarrow \mathrm{K}_0(\mathcal{Q}_\omega)$ such that $\mathrm{K}_0(\tr_\omega)(\sigma_0(g)) = g$ for all $g \in G$.  Hence there is an ordered monoid morphism $S_c \rightarrow \mathrm{K}_0^+(\mathcal{Q}_\omega)$ which preserves the unit and the state.

As $\mathcal{Q}_\omega$ has stable rank one, the canonical map $V(\mathcal{Q}_\omega) \rightarrow \mathrm{K}_0^+(\mathcal{Q}_\omega)$ is an order isomorphism and hence there is an order embedding $\mathrm{K}_0^+(\mathcal{Q}_\omega) \rightarrow \mathrm{Cu}(A)$.  Composing this map with $\sigma_0$ yields a positive, unital monoid morphism $\sigma : S_c \rightarrow \mathcal{Q}_\omega$.  Since $S$ is algebraic, there is a unique extension of $\sigma$ to a $\mathrm{Cu}$-morphism $M \rightarrow \mathcal{Q}_\omega$ by Theorem \ref{thm:ExtendingFromCompactSubmonoid}.  Since $\sigma_0$ is state-preserving and unital, so is $\sigma$.
\end{proof}

The previous lemma is enough to complete the proof of Theorem \ref{thm:ConcreteCuMFApproximations}.

\begin{proof}[Proof of Theorem \ref{thm:ConcreteCuMFApproximations}]
If $\pi_\tau$ denotes the GNS-representation of $\tau$, then $\pi_\tau(A)''$ is a von Neumann algebra and hence has real rank zero.  Since $\pi_\tau(A)$ is separable, there is a separable C$^*$-algebra $B$ with real rank zero such that $\pi_\tau(A) \subseteq B \subseteq \pi_\tau(B)''$ (see Section II.8.5 of \cite{Blackadar:Encyclopedia}).  Then $\mathrm{Cu}(B)$ is algebraic and countably based so that there is a state-preserving, unital $\mathrm{Cu}$-morphism $\mathrm{Cu}(B) \rightarrow \mathrm{Cu}(\mathcal{Q}_\omega)$.  The composition
\[ \begin{tikzcd} \mathrm{Cu}(A) \arrow{r} & \mathrm{Cu}(\pi_\tau(A)) \arrow{r} & \mathrm{Cu}(B) \arrow{r} & \mathrm{Cu}(\mathcal{Q}_\omega). \end{tikzcd} \]
is the desired $\mathrm{Cu}$-morphism.
\end{proof}

We now work towards proving a version of Theorem \ref{thm:ConcreteCuMFApproximations} for abstract Cuntz semigroups.  The next two lemmas essentially provide an analogue of the argument above for abstract Cuntz semigroups.  The role of the von Neumann algebra in the abstract setting is played by the Cuntz semigroup $M_1$ introduced by Antoine, Perera, and Thiel in Example 4.14 of \cite{AntoinePereraThiel:BiCu}.  By Proposition 4.16 in \cite{AntoinePereraThiel:BiCu}, $M_1 = \mathrm{Cu}(M)$ for any $\mathrm{II}_1$-factor $M$.  As $\mathrm{II}_1$-factors have real rank zero, $M_1$ is algebraic.  The trace on $M$ induces a state on $M_1$ denoted here by $\mu_1$.  Although $M_1$ is not countably based, we will see in Lemma \ref{lem:AlgebraicSubSemigroup}, in the same spirit as the C$^*$-algebraic argument above, that any countably based Cuntz subsemigroup of $M_1$ is contained in a countably based, algebraic Cuntz subsemigroup of $M_1$.

\begin{lemma}\label{lem:CuConnesEmbedding}
If $\mu$ is a state on a countably based algebraic, unital Cuntz semigroup $S$, then there is a unit preserving $\mathrm{Cu}$-morphism $\sigma : S \rightarrow M_1$ such that $\mu_1 \circ \sigma = \mu$.
\end{lemma}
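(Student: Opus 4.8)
The plan is to mirror the proof of Lemma \ref{lem:CuEmbeddingAlgebraicCase}, replacing the numerical target $\mathrm{Cu}(\mathcal{Q}_\omega)$ by the abstract von Neumann Cuntz semigroup $M_1 = \mathrm{Cu}(M)$ for a $\mathrm{II}_1$-factor $M$, and replacing the Rainone--Schafhauser lifting result (Theorem 4.8 of \cite{RainoneSchafhauser}) by the corresponding statement about $\mathrm{K}_0(M)$. Concretely, I would first observe, exactly as in Lemma \ref{lem:CuEmbeddingAlgebraicCase}, that since $S$ is countably based and algebraic, the compact submonoid $S_c$ is a countable basis, and that for each $x \in S_c$ compactness together with the order-unit property forces $x \leq n\cdot e$ for some $n$, hence $\mu(x) \leq n < \infty$. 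Thus $\mu(S_c) \subseteq [0,\infty)$ is countable and generates a countable subgroup $G \subseteq \mathbb{R}$ containing $1 = \mu(e)$.

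The next step is to realize $G$ inside $\mathrm{K}_0$ of a $\mathrm{II}_1$-factor in a state-preserving, unital way. The ordered $\mathrm{K}_0$-group of a $\mathrm{II}_1$-factor $M$ with separable predual is $(\mathbb{R}, \mathbb{R}_{\geq 0})$ with the trace inducing the identity $\mathrm{K}_0(\mathrm{tr}_M) : \mathrm{K}_0(M) \to \mathbb{R}$; since $G$ is a countable ordered subgroup of $\mathbb{R}$ with order unit $1$, the inclusion $G \hookrightarrow \mathbb{R} = \mathrm{K}_0(M)$ is a positive, unital group morphism $\sigma_0 : G \to \mathrm{K}_0(M)$ satisfying $\mathrm{K}_0(\mathrm{tr}_M) \circ \sigma_0 = \mathrm{id}_G$. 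Composing with $\mu|_{S_c} : S_c \to G$ gives an ordered monoid morphism $S_c \to \mathrm{K}_0^+(M)$ which preserves the unit and intertwines $\mu$ with the state induced by $\mathrm{tr}_M$. Since $M$ has stable rank one (indeed every von Neumann algebra does), the canonical map $V(M) \to \mathrm{K}_0^+(M)$ is an order isomorphism, so $\mathrm{K}_0^+(M)$ sits as an ordered submonoid of $\mathrm{Cu}(M) = M_1$; this identifies precisely with the compact submonoid $(M_1)_c$, and the induced state on it agrees with the restriction of $\mu_1$. We therefore obtain an ordered monoid morphism $S_c \to M_1$ which is unital and intertwines $\mu$ with $\mu_1$. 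Finally, since $S$ is algebraic, Theorem \ref{thm:ExtendingFromCompactSubmonoid} extends this uniquely to a $\mathrm{Cu}$-morphism $\sigma : S \to M_1$; unitality and $\mu_1 \circ \sigma = \mu$ pass from the compact submonoid to all of $S$ because every element of $S$ is a supremum of an increasing sequence from $S_c$ and both $\sigma$ and the functionals preserve suprema.

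The only point requiring a little care — and the most likely place a referee would want a citation — is the identification of the ordered $\mathrm{K}_0$-group of a $\mathrm{II}_1$-factor with $(\mathbb{R}, \mathbb{R}_{\geq 0}, 1)$ and the fact that this image, via $V(M) \cong \mathrm{K}_0^+(M)$, is exactly the compact part of $M_1 = \mathrm{Cu}(M)$ with the trace-induced state restricting to $\mu_1$. This is essentially the content of how $M_1$ is described in Example 4.14 and Proposition 4.16 of \cite{AntoinePereraThiel:BiCu}, together with standard comparison theory for projections in a finite von Neumann algebra. Everything else is a verbatim transcription of the argument in Lemma \ref{lem:CuEmbeddingAlgebraicCase}, so the proof should be short; the bulk of the genuinely new work in this direction lies in the subsequent Lemma \ref{lem:AlgebraicSubSemigroup}, which handles the passage from an arbitrary countably based Cuntz subsemigroup of $M_1$ to an algebraic one.
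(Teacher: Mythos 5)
Your argument is correct, but it takes a genuinely different route from the paper. The paper does not go through $S_c$, $\mathrm{K}_0$, or the extension theorem at all: it observes that $\mu$ itself is a $\mathrm{Q}$-morphism $(S,\ll) \rightarrow ([0,\infty],\prec_1)$ (the order-unit argument you give is used only to check that $x \ll y$ forces $\mu(x) < \infty$, i.e.\ that $\mu$ respects the auxiliary relation), and then composes with the canonical $\mathrm{Q}$-morphism $\sigma_0 : ([0,\infty],\prec_1) \rightarrow (M_1,\ll)$ with $\mu_1 \circ \sigma_0 = \mathrm{id}$, which comes directly from the construction of $M_1$ as a completion of $[0,\infty]$ in Example 4.14 of \cite{AntoinePereraThiel:BiCu}. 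So the paper's $\sigma = \sigma_0 \circ \mu$ needs neither algebraicity nor a countable basis, while your transcription of Lemma \ref{lem:CuEmbeddingAlgebraicCase} uses both (they are in the hypotheses, so this is harmless) and instead needs the identification of $V(M) \cong \mathrm{K}_0^+(M) \cong [0,\infty)$ with the compact part of $M_1 = \mathrm{Cu}(M)$ carrying $\mu_1$ to the trace, plus Theorem \ref{thm:ExtendingFromCompactSubmonoid}; on the other hand you correctly note that Theorem 4.8 of \cite{RainoneSchafhauser} becomes unnecessary, since $\mathrm{K}_0$ of a $\mathrm{II}_1$-factor is literally $(\mathbb{R},\mathbb{R}_{\geq 0},1)$. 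One small slip: your parenthetical that \emph{every} von Neumann algebra has stable rank one is false (e.g.\ $B(H)$ does not, as it is properly infinite); what you actually need is only that a $\mathrm{II}_1$-factor is finite, hence has stable rank one and cancellation, and that suffices for $V(M) \rightarrow \mathrm{K}_0^+(M)$ and $V(M) \rightarrow \mathrm{Cu}(M)_c$ to be order isomorphisms. With that correction, and the citations you flag for $(M_1)_c$ and $\mu_1$, your proof goes through; the paper's version is simply shorter because it exploits the explicit construction of $M_1$ rather than its compact part.
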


\begin{proof}
As in Example 4.14 of \cite{AntoinePereraThiel:BiCu}, define a relation $\prec_1$ on $[0, \infty]$ by $x \prec_1 y$ if, and only if, $x \leq y$ and $x < \infty$.  Then $([0, \infty], \prec_1)$ is a $\mathrm{Q}$-semigroup in the sense of Definition 4.1 in \cite{AntoinePereraThiel:BiCu}.  The pairs $(S, \ll)$ and $(M_1, \ll)$ are also $\mathrm{Q}$-semigroups and every $\mathrm{Q}$-morphism $(S, \ll) \rightarrow (M_1, \ll)$ is a $\mathrm{Cu}$-morphism.  By the construction of $M_1$ in Example 4.14 of \cite{AntoinePereraThiel:BiCu}, there is a $Q$-morphism $\sigma_0 : ([0, \infty], \prec_1) \rightarrow (M_1, \ll)$ such that $\mu_1(\sigma_0(x)) = x$ for all $x \in [0, \infty]$.

We claim $\mu : S \rightarrow [0, \infty]$ is a $\mathrm{Q}$-morphism.  By definition, $\mu$ preserves zero, addition, order, and suprema of increasing sequences.  It only remains to show $\mu$ preserves the auxiliary relation; that is, if $x \ll y$ in $S$, then $\mu(x) \prec_1 \mu(y)$ in $[0, \infty]$.  To this end, suppose $x \ll y$ in $S$.  Then $x \leq y$ and hence $\mu(x) \leq \mu(y)$.  Also, since $y \leq \sup \{ n \cdot u : n \geq 1 \}$ in $S$, there is an $n \geq 1$ such that $x \leq n \cdot u$.  Therefore, $\mu(x) \leq n < \infty$.  This shows $\mu(x) \prec_1 \mu(y)$ and that $\mu$ is a $\mathrm{Q}$-morphism.

To complete the proof, define $\sigma = \sigma_0 \circ \mu : S \rightarrow M_0$.
\end{proof}

\begin{lemma}\label{lem:AlgebraicSubSemigroup}
If $S \subseteq T$ are Cuntz semigroups such that $S$ is countably based and $T$ is algebraic, then there is a countably based, algebraic Cuntz semigroup $S'$ with $S \subseteq S' \subseteq T$.
\end{lemma}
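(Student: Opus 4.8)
The plan is to build $S'$ by hand as the collection of suprema (taken in $T$) of increasing sequences drawn from a well-chosen countable submonoid of $T_c$; this bakes in both algebraicity and countable-basedness from the start and reduces everything to routine checks.

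First I would fix a countable basis $B$ for $S$. Since $T$ is algebraic, $T_c$ is a basis for $T$, so each $b \in B$ is the supremum in $T$ of an increasing sequence from $T_c$; collecting the terms of these sequences over all $b \in B$ gives a countable set $C \subseteq T_c$. Let $C'$ be the submonoid of $T_c$ generated by $C$, which is again countable and consists of compact elements of $T$ (recall $T_c$ is closed under addition). Define
\[ S' = \{\, \textstyle\sup_m c_m : (c_m)_m \text{ an increasing sequence in } C' \,\} \subseteq T, \]
the supremum being computed in $T$. I claim $S'$ is the desired Cuntz semigroup.

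The substantive step is to show $S'$ is a sub-$\mathrm{Cu}$-semigroup of $T$, i.e.\ it contains $0$ and is closed under addition and under suprema of increasing sequences. That $0 \in S'$ and that $S'$ is closed under $+$ follow from $C'$ being a submonoid together with axiom (3) of a Cuntz semigroup. The one delicate point --- where a naive ``close up a countable set'' approach would destroy countable-basedness --- is closure under suprema of increasing sequences: given an increasing sequence $(x_k)$ in $S'$, write $x_k = \sup_m c_{k,m}$ with $(c_{k,m})_m$ increasing in $C'$. Since each $c_{k,m}$ is \emph{compact in $T$} and $c_{k,m} \le x_k \le x_{k'} = \sup_l c_{k',l}$ whenever $k' \ge k$, one has $c_{k,m} \le c_{k',l}$ for some $l$; hence the doubly-indexed family $\{c_{k,m}\}$ is upward directed, and enumerating it and passing to successive upper bounds inside the family produces an increasing sequence in $C'$ with supremum $\sup_k x_k$. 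So $\sup_k x_k \in S'$, and in particular suprema of increasing sequences in $S'$ coincide with those computed in $T$.

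It remains to record the easy consequences. The relation $\ll$ on $S'$ agrees with $\ll_T$ restricted to $S'$: if $x \ll_{S'} x'$, write $x' = \sup_m c_m$ with $c_m \in C'$; then $x \le c_m$ for some $m$, and since $c_m$ is compact in $T$ with $c_m \le x'$ we get $x \ll_T x'$, while the reverse inclusion uses that suprema agree. From this, axioms (2) and (4) for $S'$ are immediate (each $c_m$ is compact, so $(c_m)_m$ is rapidly increasing in $S'$; axiom (4) transfers from $T$), and the inclusion $S' \hookrightarrow T$ is a $\mathrm{Cu}$-morphism. Moreover $C' \subseteq S'_c$, so $S'$ is algebraic with countable basis $C'$. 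Finally $S \subseteq S'$: each $b \in B$ lies in $S'$ by construction, and any element of $S$ is the supremum in $S$ of an increasing sequence from $B$; since $S \hookrightarrow T$ is a $\mathrm{Cu}$-morphism this is also its supremum in $T$, hence it lies in $S'$ by the closure property above. The main obstacle is exactly the closure-under-suprema step, and the device that defeats it is to generate $S'$ from compact elements of $T$ rather than from a basis of $S$, turning a potentially awkward diagonalization into a directedness argument.
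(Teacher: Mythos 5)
Your construction is essentially the paper's: lift a countable basis of $S$ to a countable submonoid of compact elements of $T$, take suprema of increasing sequences from it, and verify that addition, suprema, and the relation $\ll$ on the resulting $S'$ agree with those of $T$, giving a countably based algebraic subsemigroup containing $S$. The only (harmless) difference is that you establish closure under suprema via a directedness/cofinality argument on the doubly-indexed family of compact elements, whereas the paper runs an explicit interleaving diagonalization using a rapidly increasing sequence; both verifications are correct.
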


\begin{proof}
Let $B$ denote a countable basis for $S$.  For each $x \in B$, there is a sequence $(y_{x, n})_n$ of compact elements in $T$ such that $x = \sup_n y_{x, n}$.  Let $S'_0 \subseteq N$ denote the monoid generated by $\{ y_{x,n} : x \in B, n \geq 1 \}$.  Let $S'$ denote the set of suprema of increasing sequences of elements in $S'_0$ and endow $S'$ with the order structure inherited from $T$. Note that $S'_0$ is countable and each element of $S'_0$ is a compact element of $T$.  We will show $S'$ is a countably based, algebraic Cuntz semigroup containing $S$.

If $x, y \in S'$, there are increasing sequences $(x_n)$ and $(y_n)$ in $S'_0$ such that $x = \sup x_n$ and $y = \sup y_n$.  Then $x_n + y_n \in S'_0$ and $x + y = \sup (x_n + y_n)$.  Hence $S'$ is a submonoid of $T$.  Now suppose $(x_n)$ is an increasing sequence in $S'$ and let $x = \sup x_n$, where the supremum is taken in $T$.  We claim that $x \in S'$.  For each $n \geq 1$, there is an increasing sequence $(y_{n, k})_k$ in $S'_0$ such that $x_n = \sup_k y_{n, k}$.  Since $T$ is a Cuntz semigroup, there is a rapidly increasing sequence $(x'_j)$ in $T$ with supremum $x$.  Since $x'_2 \ll x = \sup_n x_n$, there is an integer $n(1) \geq 1$ such that $x'_2 \leq x_{n(1)}$.  Then $x'_1 \ll x'_2 \leq x_{n(1)}$ and, in particular, $x'_1 \ll x_{n(1)}$.  Continuing in this fashion, we produce an increasing sequence $(n(j))_j$ of integers such that $x'_j \ll x_{n(j)}$.  As $x'_1 \ll x_{n(1)} = \sup_k y_{n(1), k}$, there is an integer $k(1)$ such that $x'_1 \leq y_{n(1), k(1)}$.  Also, $x'_2 \ll x_{n(2)} = \sup_k y_{n(2), k}$ and $y_{n(1), k(1)} \ll x_{n(1)} \leq x_{n(2)} = \sup_k y_{n(2), k}$ (since $y_{n(1), k(1)}$ is compact).  Hence there is an integer $k(2) \geq 1$ such that $x'_2 \leq y_{n(2), k(2)}$ and $y_{n(1), k(1)} \leq y_{n(2), k(2)}$.  Continue inductively and define $z_j = y_{n(j), k(j)}$.  Then $z_j$ is an increasing sequence in $S'_0$ and $x'_j \leq z_j \leq x_{n(j)}$ for every $j \geq 1$.  As $\sup x'_j = \sup x_{n(j)} = x$, it follows that $\sup_j z_j = x$ and hence $x \in S'$.

To show $S'$ is a Cuntz semigroup, it now suffices to show the relation $\ll$ computed with respect to $S'$ agrees with the relation $\ll$ computed with respect to $T$ for all pairs of elements in $S'$.  For the moment, let us denote compact containment relative to the order structure on $S'$ by $\ll'$.  Then, with this notation, we mush show that for $x, y \in S'$, we have $x \ll' y$ if and only if $x \ll y$.

For $x, y \in S'$, it is clear that $x \ll y$ implies $x \ll' y$.  Suppose conversely, $x, y \in S'$ and $x \ll' y$.  Suppose $z_n$ is an increasing sequence in $T$ such that $y \leq \sup_n z_n$.  Fix an increasing sequence $y_k$ in $S'_0$ such that $y = \sup y_k$.  As $x \ll' y$, there is an integer $k \geq 1$ such that $x \leq y_k$. As $y_k$ is compact, and $y_k \leq y \leq \sup_n z_n$, there is an integer $n \geq 1$ such that $y_k \leq z_n$.  Hence $x \leq z_n$ and it follows that $x \ll y$.  This shows the relations $\ll$ and $\ll'$ agree on $S'$ as claimed.

We have $S'$ is a Cuntz semigroup with the addition and order inherited from $T$ and the inclusion $S' \hookrightarrow T$ is a $\mathrm{Cu}$-embedding.  As $S'_0$ is countable and consists of compact elements and as every element in $S'$ is a supremum of an increasing sequence in $S'_0$, we have that $S'$ is countably based and algebraic.  Since $S'_0$ contains a basis for $S$, it follows that $S \subseteq S'$, and this completes the proof.
\end{proof}

Combining the previous three lemmas yields the following result which provides an abstract version of Theorem \ref{thm:ConcreteCuMFApproximations}.

\begin{theorem}\label{thm:AbstractCuMFApproximations}
If $S$ is a countably based, unital Cuntz semigroup and $\mu$ is a state on $S$, there is a unital $\mathrm{Cu}$-morphism $\sigma : S \rightarrow \mathrm{Cu}(\mathcal{Q}_\omega)$ such that $\mathrm{Cu}(\mathrm{tr}_\omega) \circ \sigma = \mu$.
\end{theorem}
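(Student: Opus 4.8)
The plan is to run the three preceding lemmas in series, in close analogy with the proof of Theorem~\ref{thm:ConcreteCuMFApproximations}. There, the state $\mathrm{Cu}(\tau)$ was factored through the countably based, algebraic Cuntz semigroup $\mathrm{Cu}(B)$ (with $B$ a separable real-rank-zero C$^*$-algebra between $\pi_\tau(A)$ and its weak closure) and then mapped into $\mathrm{Cu}(\mathcal{Q}_\omega)$ via Lemma~\ref{lem:CuEmbeddingAlgebraicCase}. In the abstract setting the weak closure is replaced by $M_1$, which is algebraic but not countably based, and the intermediate algebra $B$ is replaced by a countably based, algebraic Cuntz subsemigroup of $M_1$ furnished by Lemma~\ref{lem:AlgebraicSubSemigroup}.

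The first step is to apply Lemma~\ref{lem:CuConnesEmbedding} to produce a unital $\mathrm{Cu}$-morphism $\sigma_1 \colon S \to M_1$ with $\mu_1 \circ \sigma_1 = \mu$. (Although Lemma~\ref{lem:CuConnesEmbedding} is stated for algebraic $S$, its proof --- showing that the state $\mu$ is a $\mathrm{Q}$-morphism into $([0,\infty],\prec_1)$ and composing with $\sigma_0$ --- uses only that $S$ is a unital Cuntz semigroup, so it applies verbatim.)

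Next, let $B$ be a countable basis for $S$; since the order unit $u$ of $S$ is compact, it lies in $B$. Then $\sigma_1(B)$ is a countable subset of the algebraic Cuntz semigroup $M_1$, and running the construction in the proof of Lemma~\ref{lem:AlgebraicSubSemigroup}, with $\sigma_1(B)$ playing the role of the basis $B$ and $M_1$ that of $T$, yields a countably based, algebraic Cuntz semigroup $S'$ with $\sigma_1(B) \subseteq S' \subseteq M_1$, carrying the order, addition, suprema, and compact-containment relation inherited from $M_1$. Because $S'$ is closed under suprema of increasing sequences and every element of $S$ is the supremum of an increasing sequence from $B$, we get $\sigma_1(S) \subseteq S'$. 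Since $\sigma_1$ is unital, $e := \sigma_1(u)$ is the order unit of $M_1$; it is compact and lies in $S'$, and, since the order and suprema on $S'$ are those of $M_1$, it is an order unit for $S'$. Thus $(S', e)$ is a countably based, algebraic, unital Cuntz semigroup, $\sigma_1 \colon S \to S'$ is a unital $\mathrm{Cu}$-morphism, and the restriction $\mu_1|_{S'}$ is a state on $S'$ with $\mu_1|_{S'} \circ \sigma_1 = \mu$.

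Finally, apply Lemma~\ref{lem:CuEmbeddingAlgebraicCase} to $(S', \mu_1|_{S'})$ to obtain a unital $\mathrm{Cu}$-morphism $\sigma_2 \colon S' \to \mathrm{Cu}(\mathcal{Q}_\omega)$ with $\mathrm{Cu}(\mathrm{tr}_\omega) \circ \sigma_2 = \mu_1|_{S'}$, and put $\sigma = \sigma_2 \circ \sigma_1$; then $\mathrm{Cu}(\mathrm{tr}_\omega) \circ \sigma = \mu_1|_{S'} \circ \sigma_1 = \mu$, as required. Essentially all the real work has already been carried out in the three lemmas, so there is no serious obstacle left in the theorem itself; in the assembly the only points requiring care are the bookkeeping with order units and the observation --- which is precisely why Lemma~\ref{lem:AlgebraicSubSemigroup} is phrased the way it is --- that the $\mathrm{Cu}$-morphic image $\sigma_1(S)$ need not be a sub-Cuntz-semigroup of $M_1$, so one must close up a countable generating set inside $M_1$ rather than work with $\sigma_1(S)$ directly.
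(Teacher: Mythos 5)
Your proposal is correct and follows essentially the same route as the paper: apply Lemma~\ref{lem:CuConnesEmbedding} to map $S$ state-preservingly into $M_1$, enlarge (a countable generating set of) the image to a countably based, algebraic Cuntz subsemigroup $S' \subseteq M_1$ via Lemma~\ref{lem:AlgebraicSubSemigroup}, and finish with Lemma~\ref{lem:CuEmbeddingAlgebraicCase}. Your two parenthetical observations --- that the proof of Lemma~\ref{lem:CuConnesEmbedding} never uses algebraicity of $S$, and that one should compose $\mathrm{Cu}$-morphisms rather than pretend $\sigma_1$ is an inclusion --- are sound and in fact tidy up points the paper passes over quickly.
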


\begin{proof}
There is a state-preserving $\mathrm{Cu}$-morphism $S \rightarrow M_1$ by Lemma \ref{lem:CuConnesEmbedding} and hence after replacing $S$ with the Cuntz semigroup generated by the image of $S$ under this morphism, we may assume $S \subseteq M_1$.  As $M_1$ is the Cuntz semigroup of a $\mathrm{II}_1$-factor and $\mathrm{II}_1$-factors have real rank zero, $M_1$ is algebraic.  By Lemma \ref{lem:AlgebraicSubSemigroup}, there is a countably based, algebraic $\mathrm{Cu}$-semigroup $S' \subseteq M_1$ containing $S$.  Now Lemma \ref{lem:CuEmbeddingAlgebraicCase} produces an state-preserving $\mathrm{Cu}$-morphism $S'$ to $\mathrm{Cu}(\mathcal{Q}_\omega)$.  Composing this morphism with the inclusion $S \hookrightarrow S'$ yields the result.
\end{proof}

\section{Applications to Crossed Products}\label{sec:CrossedProducts}

We now work to prove Theorem \ref{thm:AICrossedProducts} and some consequences of this result.  The proofs in this section are nearly identical to those used in \cite{RainoneSchafhauser} in the real rank zero setting.  The key difference is using the Ciuperca-Elliott-Robert classification (Theorem \ref{thm:CiupercaElliottRobert}) in place of the classification of real rank zero A$\mathbb{T}$-algebras and using the $\mathrm{Cu}$-morphisms provided by Theorem \ref{thm:ConcreteCuMFApproximations} in place of the analogous result for $\mathrm{K}_0$-groups.

Theorem \ref{thm:MFDynamics} provides a method for showing traces on crossed products which factor through the expectation are MF, but to handle arbitrary traces, we use the following two results.  The first is due to de la Harpe and Skandalis in \cite{HarpeSkandalis} and shows for non-abelian free groups, all traces factor through the expectation.  In the case of actions of the integers, the structure of traces is much more complicated, but in our setting, we can work around this using the Tikuisis-White-Winter Theorem (see \cite{TikuisisWhiteWinter} and \cite{Schafhauser:TWW}).

\begin{theorem}[de la Harpe, Skandalis]\label{thm:HarpeSkandalis}
If $A$ is a C*-algebra and $F$ is a non-abelian free group acting on $A$, then every trace on $A \rtimes_\lambda F$ has the form $\tau \circ \mathbb{E}$ where $\tau$ is a $F$-invariant trace on $A$ and $\mathbb{E} : A \rtimes_\lambda F \rightarrow A$ is the canonical expectation.
\end{theorem}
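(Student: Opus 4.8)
The plan is to exploit the ``Powers property'' of non-abelian free groups, namely that for every finite subset $S \subseteq F \setminus \{1\}$ there is a partition $F = C_0 \sqcup C_1 \sqcup C_2$ and group elements $t_1, t_2 \in F$ such that $s C_0 \cap C_0 = \emptyset$ for all $s \in S$ and $t_i^j C_i$ ($j = 1, 2$) are pairwise disjoint subsets of $F \setminus C_i$. Concretely, I would work inside $\mathbb{F}_2 = \langle a, b \rangle$ (every non-abelian free group contains a copy, and the elements of $S$ can be conjugated into such a copy, or one reduces to a two-generator subgroup containing the supports involved), and use the standard combinatorial partition: $C_0$ the set of reduced words beginning with a nonzero power of $a$, and the two remaining pieces arranged so that conjugating by high powers of $b$ (and of $a$) shifts them off themselves. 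This is exactly the hypothesis used by Powers to prove simplicity of $\mathrm{C}^*_\lambda(\mathbb{F}_2)$, and de la Harpe--Skandalis observed it also forces uniqueness of the trace on the crossed product modulo the expectation.

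Next I would set up the analytic core. Let $\tau$ be a trace on $A \rtimes_\lambda F$ and set $\tau_0 = \tau|_A$; this is clearly an $F$-invariant trace on $A$, so $\tau_0 \circ \mathbb{E}$ is a trace on $A \rtimes_\lambda F$. I want to show $\tau = \tau_0 \circ \mathbb{E}$, which amounts to showing $\tau(a \lambda_s) = 0$ for every $a \in A$ and every $s \in F \setminus \{1\}$. Fix such an $a$ and $s$, and using the partition above choose a projection-like averaging: let $p = \frac{1}{n}\sum_{g} \lambda_g e_{C_0} \lambda_g^{-1}$-type elements, or more cleanly, pick unitaries $v_1, \dots, v_N$ in $\mathrm{C}^*_\lambda(F) \subseteq \mathcal{M}(A \rtimes_\lambda F)$ (finite products and sums of the $\lambda_{t_i^j}$'s weighted by the characteristic functions $e_{C_i}$ of the partition, which lie in $\ell^\infty(F) \subseteq \mathcal{B}(\ell^2 F)$ and multiply against $A \rtimes_\lambda F$) so that the operator norm of $\frac{1}{N}\sum_{k=1}^N v_k (a\lambda_s) v_k^*$ is small --- this is the quantitative Powers estimate, giving $\|\frac{1}{N}\sum v_k x v_k^*\| \leq \frac{2}{\sqrt{3}}\|x\|$ or similar for $x = a\lambda_s$ with $s \neq 1$, and iterating drives it to $0$. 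Since $\tau$ is a trace, $\tau(v_k x v_k^*) = \tau(x)$ for each $k$, hence $\tau(x) = \tau(\frac{1}{N}\sum v_k x v_k^*)$, and the right side is bounded by $\|\tau\|$ times a quantity tending to $0$; therefore $\tau(a\lambda_s) = 0$.

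The one genuine subtlety --- and the step I expect to be the main obstacle to write carefully --- is that the averaging unitaries $v_k$ do not lie in $A \rtimes_\lambda F$ itself but in its multiplier algebra (they involve the multiplication operators $e_{C_i} \in \ell^\infty(F)$), so one must check that conjugation by them still preserves the trace. This is handled by approximating: $v_k x v_k^*$ for $x \in A \rtimes_\lambda F$ again lies in $A \rtimes_\lambda F$ (the partition projections normalize the crossed product appropriately), and the trace extends to a well-defined functional on the relevant operators by a limiting argument, or one simply verifies directly that each $v_k x v_k^*$ is a finite sum of terms $a' \lambda_{s'}$ already inside $A \rtimes_\lambda F$ with $\tau$ of each piece unchanged by the trace identity applied to genuine unitaries in the unitization. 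Once this bookkeeping is in place, linearity and continuity of $\tau$ (it is bounded, being a trace) extend the conclusion $\tau(a\lambda_s) = 0$ from the dense $*$-subalgebra to all of $A \rtimes_\lambda F$, yielding $\tau = \tau_0 \circ \mathbb{E}$ as desired. Since this theorem is quoted verbatim from \cite{HarpeSkandalis}, in the paper itself I would simply cite it rather than reproduce the argument.
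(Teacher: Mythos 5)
The paper itself offers no argument here: the statement is quoted directly from \cite{HarpeSkandalis}, so the only meaningful comparison is with the standard Powers-averaging proof, which is indeed the route you are reconstructing, and your overall strategy (reduce to showing $\tau(a\lambda_s)=0$ for $s\neq 1$ via averaging over unitaries coming from the Powers property of $F$) is the right one. However, there is a concrete misstep in your execution. You build the averaging unitaries $v_k$ out of the characteristic functions $e_{C_i}\in\ell^\infty(F)$ together with the group unitaries. Such elements do \emph{not} lie in the multiplier algebra of $A\rtimes_\lambda F$, and conjugation by them does not preserve $A\rtimes_\lambda F$: an element like $e_C(a\lambda_s)e_{C'}$ lives in (roughly) $\ell^\infty(F,A)\rtimes_r F$, not in $A\rtimes_\lambda F$, and $\tau$ has no canonical extension to such operators for which the identity $\tau(v_k x v_k^*)=\tau(x)$ could be invoked. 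Your proposed patch (``the partition projections normalize the crossed product appropriately,'' ``the trace extends by a limiting argument'') is precisely the point that fails, so as written the trace-invariance step collapses.

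The fix is that in the de la Harpe--Skandalis argument the averaging is done \emph{only} over the canonical group unitaries $\lambda_{t_j}$, which do lie in $M(A\rtimes_\lambda F)$; traciality against multiplier unitaries (e.g.\ by passing to the tracial vector state on $\pi_\tau(A\rtimes_\lambda F)''$) gives $\tau(\lambda_{t_j}x\lambda_{t_j}^*)=\tau(x)$ legitimately. The characteristic functions of the Powers partition enter only as projections on $\ell^2(F)\otimes H$ in the Hilbert-space norm estimate, yielding $\bigl\|\tfrac1n\sum_{j}\lambda_{t_j}x\lambda_{t_j}^*\bigr\|\leq \tfrac{C}{\sqrt n}\|x\|$ for $x$ a finite sum $\sum_{s\neq 1}a_s\lambda_s$; they never appear as elements conjugating $x$. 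With that correction the proof closes exactly as you intend: $\tau(x)=\tau\bigl(\tfrac1n\sum_j\lambda_{t_j}x\lambda_{t_j}^*\bigr)\to 0$, so $\tau$ vanishes on the kernel of $\mathbb{E}$ by density and continuity, and $\tau=\tau|_A\circ\mathbb{E}$ with $\tau|_A$ invariant, again by traciality against the unitaries $\lambda_s\in M(A\rtimes_\lambda F)$. One further quantitative slip: your constant $2/\sqrt 3$ exceeds $1$, so iterating it cannot drive anything to zero; you need either $n$ large (constant $O(1/\sqrt n)$) or the genuine Powers constant strictly below $1$ before iteration makes sense.
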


Recall that an algebra is \emph{locally Type I} if the following approximation property holds: for every finite set $\mathcal{F} \subseteq A$ and for every $\varepsilon > 0$, there is a Type I subalgebra $B \subseteq A$ such that for every $a \in \mathcal{F}$, there is a $b \in B$ such that $\| a - b \| < \varepsilon$.  Note that as quotients of Type I algebras are Type I, we also have the quotients of locally Type I algebras are locally Type I.

\begin{theorem}\label{thm:locallyTypeICrossedProduct}
Suppose $A$ is a locally Type I algebra and $G$ is a free abelian group.  Then every trace on $A \rtimes_\lambda G$ is quasidiagonal and, in particular, is MF.
\end{theorem}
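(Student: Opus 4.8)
The plan is to reduce to the Tikuisis--White--Winter Theorem (see \cite{TikuisisWhiteWinter}, \cite{Schafhauser:TWW}), which asserts that every trace on a separable, nuclear C$^*$-algebra satisfying the UCT is quasidiagonal. The strategy is to show that $A \rtimes_\lambda G$ is nuclear and satisfies the UCT and that, after a standard separability reduction, the given trace can be realized on a separable subalgebra of this type. First I would reduce to the separable case: given a trace $\sigma$ on $A \rtimes_\lambda G$, since $G$ is countable (it is a free abelian group, hence $\mathbb{Z}^{(\kappa)}$ for some cardinal $\kappa$, but I would additionally assume $\kappa$ is countable, or else restrict attention to the separable subalgebra generated by any separable $G$-invariant subalgebra of $A$ together with the group unitaries over a countable subgroup), one can find an increasing net of separable, $G$-invariant subalgebras $A_i \subseteq A$ with $\overline{\bigcup_i A_i} = A$; each $A_i$ is again locally Type I (subalgebras of locally Type I algebras are locally Type I), and $A \rtimes_\lambda G = \overline{\bigcup_i A_i \rtimes_\lambda G}$. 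A trace on $A \rtimes_\lambda G$ is quasidiagonal as soon as its restriction to each $A_i \rtimes_\lambda G$ is quasidiagonal, so it suffices to treat the case where $A$ is separable.

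Next I would establish that $A \rtimes_\lambda G$ is nuclear and satisfies the UCT. Nuclearity: a locally Type I C$^*$-algebra is nuclear (Type I algebras are nuclear, and nuclearity passes to inductive limits and is detected locally), and since $G$ is amenable, the reduced crossed product $A \rtimes_\lambda G = A \rtimes G$ is nuclear. For the UCT: a separable Type I C$^*$-algebra satisfies the UCT, the UCT class is closed under inductive limits, so a separable locally Type I algebra satisfies the UCT; and since $G$ is amenable (indeed $\mathbb{Z}^n$ or a countable free abelian group, built from copies of $\mathbb{Z}$ via inductive limits), the crossed product by $G$ stays in the UCT class by the Pimsner--Voiculescu sequence / Baum--Connes for $\mathbb{Z}^n$ together with a limit argument. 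Hence $A \rtimes_\lambda G$ is a separable, nuclear C$^*$-algebra in the UCT class.

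Finally, the trace $\sigma$ on $A \rtimes_\lambda G$ is then quasidiagonal by the Tikuisis--White--Winter Theorem, and every quasidiagonal trace is MF (a quasidiagonal trace admits approximating $\ast$-homomorphisms into matrix algebras that are approximately multiplicative on a norm-dense set and approximately trace-preserving, which is exactly the MF condition). This completes the argument. The main obstacle I anticipate is purely bookkeeping around (i) the separability reduction when $G$ is an uncountable free abelian group --- one must be careful to choose the separable $G$-invariant subalgebras and then pass to a countable subgroup of $G$ generating the relevant group unitaries, using that any fixed element of $A \rtimes_\lambda G$ is supported on finitely many group elements --- and (ii) verifying that the UCT is inherited by crossed products by $G$; for $G = \mathbb{Z}^n$ this is the iterated Pimsner--Voiculescu sequence, and for a general free abelian group one writes $G$ as an increasing union of finitely generated free abelian subgroups and uses continuity of the UCT class under inductive limits. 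Neither step is deep, but the UCT-permanence under the crossed product is the place where a reader would want the details spelled out.
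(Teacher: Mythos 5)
There is a genuine gap: you quote the Tikuisis--White--Winter Theorem as saying that \emph{every} trace on a separable, nuclear C$^*$-algebra in the UCT class is quasidiagonal, but the theorem only applies to \emph{faithful} traces (and, via Corollary 6.1 of \cite{TikuisisWhiteWinter}, to all traces on an algebra that \emph{admits} a faithful trace). The trace $\sigma$ on $A \rtimes_\lambda G$ in this theorem is arbitrary, and nothing in your argument addresses faithfulness. The obvious repair --- pass to the quotient of $A \rtimes_\lambda G$ by the trace-kernel ideal of $\sigma$, where the induced trace is faithful --- runs into the real obstruction: it is not known that the UCT passes to quotients of nuclear UCT algebras, so you cannot conclude that this quotient is again in the UCT class merely from the fact that $A \rtimes_\lambda G$ is. This is exactly why the theorem is stated for locally Type I algebras rather than for arbitrary nuclear UCT algebras, and why the paper's proof takes a more careful route: it forms the ideal $J = \{a \in A : \tau(a^*a) = 0\}$ \emph{inside $A$} (where $\tau$ is the restriction of the given trace), notes $J$ is $G$-invariant, and passes to $(A/J) \rtimes_\lambda G \cong (A \rtimes_\lambda G)/(J \rtimes_\lambda G)$. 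The point is that $A/J$ is again locally Type I, so it satisfies the UCT by Dadarlat's theorem (Theorem 1.1 of \cite{Dadarlat:locallyUCT}), and hence so does $(A/J) \rtimes_\lambda G$; moreover $\bar{\tau}|_{A/J} \circ \mathbb{E}$ is a \emph{faithful} trace on $(A/J) \rtimes_\lambda G$, so Corollary 6.1 of \cite{TikuisisWhiteWinter} applies to show every trace on this quotient, in particular the one induced by $\sigma$, is quasidiagonal, and quasidiagonality pulls back along the quotient map.

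A secondary, more minor point: your derivation of the UCT for $A$ itself from ``Type I algebras satisfy the UCT and the UCT class is closed under inductive limits'' is not quite right, since locally Type I in this paper means local approximation by Type I subalgebras, not an inductive limit decomposition; the correct tool is again Dadarlat's local-approximation result \cite{Dadarlat:locallyUCT}. Your separability and finite-generation reductions, the nuclearity of $A \rtimes_\lambda G$, the preservation of the UCT under crossed products by $\mathbb{Z}^k$, and the observation that quasidiagonal traces are MF are all fine and agree with the paper; the missing idea is the passage to the trace-kernel quotient of $A$ together with the fact that local Type I-ness (hence the UCT) survives this quotient and that the quotient crossed product carries a faithful trace.
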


\begin{proof}
We may assume $A$ is separable and that $G$ is finitely generated; say $G \cong \mathbb{Z}^k$.

Let $\tau$ be a trace on $A \rtimes_\lambda G$ and let $J = \{ a \in A : \tau(a^*a) = 0 \}$.  A standard application of the Cauchy-Schwarz inequality shows $J$ is a two-sided ideal in $A$.  Moreover, $J$ is easily seen to be $G$-invariant.  Now, $J \rtimes_\lambda G$ is an ideal in $A \rtimes_\lambda G$ and $\tau$ vanishes on $J \rtimes_\lambda G$.  Hence $\tau$ induces a trace $\bar{\tau}$ on $(A \rtimes_\lambda G) / (J \rtimes_\lambda G) \cong (A / J) \rtimes_\lambda G$ and $\bar{\tau}|_{A/J}$ is faithful.  To show $\tau$ is quasidiagonal, it suffices to show $\bar{\tau}$ is quasidiagonal.

As $A$ is locally Type I, so is $A / J$.  By Theorem 1.1 in \cite{Dadarlat:locallyUCT}, $A/J$ satisfies the UCT.  Since the UCT is preserved by taking crossed products by $\mathbb{Z}$, the algebra $(A / J) \rtimes_\lambda G$ satisfies the UCT.  The trace $\bar{\tau}|_{A/J} \circ \mathbb{E}$ is a faithful trace on $(A / J) \rtimes_\lambda G$ and hence the Tikuisis-White-Winter Theorem implies every trace on $(A / J) \rtimes_\lambda G$ is quasidiagonal (see Corollary 6.1 in \cite{TikuisisWhiteWinter}).
\end{proof}

The following contains Theorem \ref{thm:AICrossedProducts} as a special case.

\begin{theorem}\label{thm:ANCCW1CrossedProducts}
Suppose $A$ is a C$^*$-algebra such that $A \otimes \mathcal{Q}$ is a A$\mathcal{C}_0$-algebra.  If $F$ is a free group acting on $A$, then every trace on $A \rtimes_\lambda F$ is MF.
\end{theorem}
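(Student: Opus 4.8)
The plan is to run the strategy outlined in the introduction, reducing to the two cases $F$ non-abelian and $F = \mathbb{Z}$. First suppose $F$ is non-abelian. By Theorem \ref{thm:HarpeSkandalis}, every trace on $A \rtimes_\lambda F$ has the form $\tau \circ \mathbb{E}$ for an $F$-invariant trace $\tau$ on $A$; so by Theorem \ref{thm:MFDynamics}(2) (applicable since $\mathrm{C}^*_\lambda(F)$ is exact and $\mathrm{tr}_F$ is MF) it suffices to show $\tau$ is $\alpha$-MF, i.e.\ that there is a covariant representation $(\varphi, u) : (A, F, \alpha) \to \mathcal{Q}_\omega$ with $\mathrm{tr}_\omega \circ \varphi = \tau$. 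I would first pass to $A \otimes \mathcal{Q}$: since $\mathcal{Q}$ is strongly self-absorbing and unital, tracial data and MF data for $A$ and for $A \otimes \mathcal{Q}$ correspond (tensor $\varphi$ with a unital embedding $\mathcal{Q} \to \mathcal{Q}_\omega \cong (\mathcal{Q}_\omega) \otimes \mathcal{Q}$, and restrict along $A \to A \otimes \mathcal{Q}$, $a \mapsto a \otimes 1$); the action $\alpha$ extends to $\alpha \otimes \mathrm{id}$ and the trace $\tau$ to $\tau \otimes \mathrm{tr}_{\mathcal{Q}}$. Thus we may assume $A$ itself is a unital A$\mathcal{C}_0$-algebra (we may assume $A$ unital after unitizing, or simply work with $\mathrm{Cu}(A)$'s order unit $\langle 1 \rangle$ coming from the unit of $A \otimes \mathcal{Q}$).

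Next, apply Theorem \ref{thm:ConcreteCuMFApproximations} to get a unital $\mathrm{Cu}$-morphism $\sigma : \mathrm{Cu}(A) \to \mathrm{Cu}(\mathcal{Q}_\omega)$ with $\mathrm{Cu}(\mathrm{tr}_\omega) \circ \sigma = \mathrm{Cu}(\tau)$. The key point is to arrange $F$-equivariance. Since $\tau$ is $F$-invariant, $\mathrm{Cu}(\tau) \circ \mathrm{Cu}(\alpha_s) = \mathrm{Cu}(\tau)$ for all $s$; I want a single $\sigma$ with $\sigma \circ \mathrm{Cu}(\alpha_s) = \sigma$. Following \cite{RainoneSchafhauser}, one does this by working on the ``coinvariant'' Cuntz semigroup: let $S$ be the quotient (or more carefully the Cuntz-semigroup analogue thereof) of $\mathrm{Cu}(A)$ by the relations identifying $x$ with $\mathrm{Cu}(\alpha_s)(x)$; since $\tau$ is invariant, $\mathrm{Cu}(\tau)$ descends to a state on $S$, and since $S$ is still countably based (it is a quotient of a countably based Cuntz semigroup and one checks it remains a Cuntz semigroup) Theorem \ref{thm:AbstractCuMFApproximations} gives a unital, state-preserving $\mathrm{Cu}$-morphism $S \to \mathrm{Cu}(\mathcal{Q}_\omega)$; composing with $\mathrm{Cu}(A) \to S$ yields an $F$-invariant, state-preserving $\sigma$. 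Now use Theorem \ref{thm:CiupercaElliottRobert}(1) to lift $\sigma$ to a unital $*$-homomorphism $\varphi : A \to \mathcal{Q}_\omega$ (note $\mathcal{Q}_\omega$ has stable rank one), and Theorem \ref{thm:CiupercaElliottRobert}(2), applied to $\varphi$ and $\varphi \circ \alpha_s$ (which have the same $\mathrm{Cu}$-morphism by $F$-invariance of $\sigma$), to get unitaries approximately conjugating one to the other; a reindexing/diagonal argument in the ultrapower $\mathcal{Q}_\omega$ then produces honest unitaries $u_s \in \mathcal{Q}_\omega$ with $u_s \varphi(a) u_s^* = \varphi(\alpha_s(a))$, and a standard argument upgrades the $u_s$ to a group homomorphism $u : F \to U(\mathcal{Q}_\omega)$ (this is where freeness of $F$ is used: there are no relations to satisfy, so one simply chooses the $u_s$ on free generators). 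Since $\mathrm{Cu}(\mathrm{tr}_\omega) \circ \sigma = \mathrm{Cu}(\tau)$ forces $\mathrm{tr}_\omega \circ \varphi = \tau$ (by uniqueness of the trace coming from a state on $\mathrm{Cu}$ restricted to $\langle 1 \rangle$, or directly from $\mathrm{Cu}(\mathrm{tr}_\omega)(\sigma\langle a \rangle) = \mathrm{Cu}(\tau)\langle a \rangle$ on $\langle (a - t)_+ \rangle$), we obtain the desired covariant representation, and Theorem \ref{thm:MFDynamics}(2) finishes the non-abelian case.

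For $F = \mathbb{Z}$, the equivariant Cuntz-semigroup argument does not directly give all traces on $A \rtimes_\lambda \mathbb{Z}$ (which need not factor through $\mathbb{E}$), so instead I invoke the Tikuisis-White-Winter route: $A$, being an A$\mathcal{C}_0$-algebra after tensoring with $\mathcal{Q}$, is locally Type I (each Elliott-Thomsen building block is Type I, being subhomogeneous), hence so is $A$ itself up to the $\mathcal{Q}$-tensor reduction, and Theorem \ref{thm:locallyTypeICrossedProduct} applies to show every trace on $A \rtimes_\lambda \mathbb{Z}$ is quasidiagonal, hence MF. (One should be slightly careful that ``$A \otimes \mathcal{Q}$ is A$\mathcal{C}_0$'' is what we're given rather than ``$A$ is A$\mathcal{C}_0$''; but $A \otimes \mathcal{Q}$ locally Type I implies $A \rtimes_\lambda \mathbb{Z}$ has the same tracial simplex data as $(A \otimes \mathcal{Q}) \rtimes_\lambda \mathbb{Z}$ via the strongly-self-absorbing argument, so we may replace $A$ by $A \otimes \mathcal{Q}$ throughout.)

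The main obstacle I anticipate is the equivariance step: making sense of the quotient Cuntz semigroup $S$ by the $F$-action and verifying it is a countably based Cuntz semigroup on which the state descends, and then the lifting-to-a-group-homomorphism step in $\mathcal{Q}_\omega$ — turning the family of approximate intertwiners from Theorem \ref{thm:CiupercaElliottRobert}(2) into a genuine unitary representation. Both are handled in \cite{RainoneSchafhauser} in the $\mathrm{K}_0$ / real-rank-zero setting, and the arguments transfer essentially verbatim with $\mathrm{Cu}$ in place of $\mathrm{K}_0$ and Theorem \ref{thm:AbstractCuMFApproximations} in place of the corresponding $\mathrm{K}_0$-statement; the freeness of $F$ is exactly what makes the group-homomorphism lift automatic.
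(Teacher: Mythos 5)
Your overall skeleton matches the paper's proof: split into $F=\mathbb{Z}$ (handled by Theorem \ref{thm:locallyTypeICrossedProduct}) and $F$ non-abelian, use Theorem \ref{thm:HarpeSkandalis} to reduce to showing invariant traces are $\alpha$-MF, replace $A$ by $A\otimes\mathcal{Q}$ to assume $A$ is an A$\mathcal{C}_0$-algebra, lift a state-preserving $\mathrm{Cu}$-morphism via Theorem \ref{thm:CiupercaElliottRobert}(1), compare $\varphi$ with $\varphi\circ\alpha_s$ via Theorem \ref{thm:CiupercaElliottRobert}(2) plus reindexing, and use freeness of $F$ to define $u$ on generators. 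The place where your argument has a genuine gap is the equivariance step. You propose to obtain an $F$-invariant $\sigma$ by forming a ``coinvariant'' Cuntz semigroup $S$, i.e.\ the quotient of $\mathrm{Cu}(A)$ by the relations $x\sim\mathrm{Cu}(\alpha_s)(x)$, descending the state, and applying Theorem \ref{thm:AbstractCuMFApproximations}. No such construction is carried out or available: it is not at all clear that this quotient (after completing under suprema) satisfies the Cuntz semigroup axioms, that the compact-containment relation and countable basis survive, or that $\mathrm{Cu}(\tau)$ descends to a well-defined state on it. The claim that this ``transfers essentially verbatim'' from the $\mathrm{K}_0$ setting of \cite{RainoneSchafhauser} is not accurate on two counts: quotients of $\mathrm{Cu}$-semigroups by group actions are far more delicate than quotients of abelian groups, and \cite{RainoneSchafhauser} does not in fact use coinvariants of $\mathrm{K}_0(A)$ to build the maps --- as recalled in the introduction of this paper, the morphisms there are built on $\mathrm{K}_0(A\rtimes_\lambda F)$ and then restricted along $\mathrm{K}_0(A)\to\mathrm{K}_0(A\rtimes_\lambda F)$.

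The paper's device, which you should adopt, removes the problem entirely: apply Theorem \ref{thm:ConcreteCuMFApproximations} not to $A$ but to the crossed product $A\rtimes_\lambda F$ with the trace $\tilde\tau=\tau\circ\mathbb{E}$, obtaining a state-preserving unital $\mathrm{Cu}$-morphism $\sigma:\mathrm{Cu}(A\rtimes_\lambda F)\to\mathrm{Cu}(\mathcal{Q}_\omega)$, and then set the relevant morphism to be $\sigma\circ\mathrm{Cu}(\iota)$ where $\iota:A\to A\rtimes_\lambda F$ is the canonical inclusion. Invariance is then automatic, since $\iota\circ\alpha_s=\mathrm{ad}(\lambda_s)\circ\iota$ and inner automorphisms induce the identity on the Cuntz semigroup, so
\[ \sigma\circ\mathrm{Cu}(\iota\circ\alpha_s)=\sigma\circ\mathrm{Cu}(\mathrm{ad}(\lambda_s)\circ\iota)=\sigma\circ\mathrm{Cu}(\iota). \]
With this substitution the rest of your outline (lifting, approximate unitary equivalence upgraded to exact equivalence by reindexing in $\mathcal{Q}_\omega$, extension to a homomorphism on the free group, and recovery of $\mathrm{tr}_\omega\circ\varphi=\tau$ from the $\mathrm{Cu}$-level state identity) goes through as in the paper; your handling of the $F=\mathbb{Z}$ case and of the passage to $A\otimes\mathcal{Q}$ is also consistent with the paper's argument.
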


\begin{proof}
The case when $F = \mathbb{Z}$ follows from Theorem \ref{thm:locallyTypeICrossedProduct}.  When $F$ is a non-abelian free group, every trace on $A \rtimes_\lambda F$ has the form $\tau \circ \mathbb{E}$ for an invariant trace $\tau$ on $A$ by Theorem \ref{thm:HarpeSkandalis}.  Hence it suffices to show every invariant trace on $A$ is $\alpha$-MF where $\alpha$ is the action of $F$ on $A$.

We may assume $A$ is unital and $F$ is freely generated by finitely many elements $s_1, \ldots, s_n$.  After replacing $A$ with $A \otimes \mathcal{Q}$, we may assume $A$ is an A$\mathcal{C}_0$-algebra.  Let $\alpha_j = \alpha_{s_j}$ and $\lambda_j = \lambda_{s_j} \in A \rtimes_\lambda F$ for $j = 1, \ldots, n$.  It suffices to show there is a trace-preserving *-homomorphism $\varphi : A \rightarrow \mathcal{Q}_\omega$ and unitaries $u_1, \ldots, u_n \in \mathcal{Q}_\omega$ such that $u_j \varphi(a) u_j^* = \varphi(\alpha_j(a))$.

The trace $\tau$ induces a trace $\tilde{\tau} = \tau \circ \mathbb{E}$ on $A \rtimes_\lambda F$ where $\mathbb{E} : A \rtimes_\lambda F \rightarrow A$ is the canonical conditional expectation.  By Theorem \ref{thm:ConcreteCuMFApproximations}, there is a state-preserving $\mathrm{Cu}$-morphism
\[ \sigma : \mathrm{Cu}(A \rtimes_\lambda F) \rightarrow \mathrm{Cu}(\mathcal{Q}_\omega). \]
Let $\iota : A \rightarrow A \rtimes_\lambda F$ denote the canonical inclusion and note that since $\iota$ is trace-preserving, the composition $\sigma \circ \mathrm{Cu}(\iota)$ is state-preserving.

By the Ciuperca-Elliott-Robert Theorem (Theorem \ref{thm:CiupercaElliottRobert} above), there is a unital *-homomorphism $\varphi : A \rightarrow \mathcal{Q}_\omega$ such that $\mathrm{Cu}(\varphi) = \sigma \circ \mathrm{Cu}(\iota)$.  For $j = 1, \ldots, n$, we have
\[ \mathrm{Cu}(\varphi \circ \alpha_j) = \sigma \circ \mathrm{Cu}(\iota \circ \alpha_j) = \sigma \circ \mathrm{Cu}(\mathrm{ad}(\lambda_j) \circ \iota) = \sigma \circ \mathrm{Cu}(\iota) = \mathrm{Cu}(\varphi). \]
Applying the Ciuperca-Elliott-Robert Theorem again shows $\varphi$ and $\varphi \circ \alpha_j$ are approximately unitarily equivalent and hence are unitarily equivalent by applying a reindexing argument in $\mathcal{Q}_\omega$.  That is there are unitaries $u_j \in \mathcal{Q}_\omega$ such that $\varphi \circ \alpha_j = \mathrm{ad}(u_j) \circ \varphi$ as desired.
\end{proof}

\begin{corollary}\label{cor:ANCCW1CrossedProducts}
Suppose $A$ a unital C*-algebra such that $A \otimes \mathcal{Q}$ is an A$\mathcal{C}_0$-algebra and $F$ is a free group acting minimally on $A$.  The following are equivalent:
\begin{enumerate}
  \item $A \rtimes_\lambda F$ is MF;
  \item $A \rtimes_\lambda F$ is stably finite;
  \item $A$ admits an invariant trace.
\end{enumerate}
\end{corollary}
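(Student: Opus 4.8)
The plan is to prove the cycle $(1) \Rightarrow (2) \Rightarrow (3) \Rightarrow (1)$; the one nontrivial input is Theorem \ref{thm:ANCCW1CrossedProducts}, and everything else is an assembly of standard facts. For $(1) \Rightarrow (2)$ I would use that MF algebras are stably finite: a separable MF algebra embeds in $\mathcal{Q}_\omega$, which has stable rank one and hence is stably finite, and stable finiteness passes to C$^*$-subalgebras. Since $A \otimes \mathcal{Q}$ is an A$\mathcal{C}_0$-algebra it is separable, so $A$ and $A \rtimes_\lambda F$ are separable, and the implication applies.

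For $(2) \Rightarrow (3)$, the first step is to observe that $A \rtimes_\lambda F$ is exact. Indeed $A \otimes \mathcal{Q}$, being an A$\mathcal{C}_0$-algebra, is nuclear and therefore exact, so its C$^*$-subalgebra $A \cong A \otimes 1$ is exact; since $F$ is an exact group, the reduced crossed product $A \rtimes_\lambda F$ is then exact. It is also unital and, by hypothesis, stably finite, so by the theorems of Blackadar--Handelman and Haagerup it admits a tracial state $\sigma$ (stable finiteness yields a quasitrace, which is automatically a trace on an exact C$^*$-algebra). Put $\tau = \sigma \circ \iota$ for the canonical unital inclusion $\iota \colon A \to A \rtimes_\lambda F$: then $\tau$ is a trace on $A$, and it is $F$-invariant because $\iota(\alpha_s(a)) = \lambda_s \iota(a) \lambda_s^*$ gives $\tau(\alpha_s(a)) = \sigma(\lambda_s \iota(a) \lambda_s^*) = \sigma(\iota(a)) = \tau(a)$ using that $\sigma$ is tracial. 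This is $(3)$.

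For $(3) \Rightarrow (1)$, let $\tau$ be an $F$-invariant trace on $A$. Minimality of the action forces $\tau$ to be faithful: the GNS-kernel $N_\tau = \{ x \in A : \tau(x^*x) = 0 \}$ is a closed two-sided ideal (by the trace property), it is $\alpha$-invariant since $\tau \circ \alpha_s = \tau$, and it is proper since $\tau(1) = 1$, so minimality gives $N_\tau = 0$. Consequently $\tilde\tau = \tau \circ \mathbb{E}$ is a trace on $A \rtimes_\lambda F$, and it is faithful because $\tau$ is faithful and the canonical expectation $\mathbb{E} \colon A \rtimes_\lambda F \to A$ is faithful. By Theorem \ref{thm:ANCCW1CrossedProducts} the trace $\tilde\tau$ is MF; thus $A \rtimes_\lambda F$ is a unital C$^*$-algebra carrying a faithful MF trace, hence is MF.

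The only implication that needs real care is $(2) \Rightarrow (3)$, and the point of friction there is establishing exactness of $A \rtimes_\lambda F$, which is what converts stable finiteness into an honest tracial state through the quasitrace-to-trace machinery. Exactness is not among the hypotheses, and $A \rtimes_\lambda F$ is generally not nuclear (since $\mathrm{C}^*_\lambda(F)$ is not, for $F$ nonabelian free), so it must be read off from the nuclear ambient algebra $A \otimes \mathcal{Q}$ via the subalgebra embedding $A \hookrightarrow A \otimes \mathcal{Q}$. All the remaining steps are routine.
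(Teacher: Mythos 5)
Your proposal is correct and follows essentially the same route as the paper: MF implies stably finite via $\mathcal{Q}_\omega$, stable finiteness yields a trace via Haagerup's quasitrace result (the paper cites Corollary 5.12 of that paper, which is exactly the exact-plus-stably-finite-implies-trace statement you assemble) whose restriction to $A$ is invariant, and minimality makes an invariant trace faithful so that $\tau \circ \mathbb{E}$ is a faithful MF trace by Theorem \ref{thm:ANCCW1CrossedProducts}, whence the crossed product is MF. The only difference is that you spell out the exactness of $A \rtimes_\lambda F$, which the paper leaves implicit in its citation.
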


\begin{proof}
The implication (1) implies (2) is well known and follows immediately from the stable finiteness of $\mathcal{Q}_\omega$.  Assuming (2) holds, $A \rtimes_\lambda F$ admits a trace (Corollary 5.12 in \cite{Haagerup:Quasitraces}) which restricts to an invariant trace on $A$.  If (3) holds, let $\tau$ be an invariant trace on $A$.  By minimality, $\tau$ is faithful and hence induces a faithful trace on $\tau \circ \mathbb{E}$ on $A \rtimes_\lambda F$.  The trace $\tau \circ \mathbb{E}$ is MF and hence $A \rtimes_\lambda F$ is MF.
\end{proof}

Recall that a C$^*$-algebra $A$ has the \emph{ideal property} if every ideal of $A$ is generated as an ideal by the projections it contains.

\begin{corollary}\label{cor:AHCrossedProducts}
Suppose $A$ is an AH-algebra with the ideal property such that $\mathrm{K}_1(A)$ is a torsion group.  If $F$ is a free group acting on $A$, then every trace on $A \rtimes_\lambda F$ is MF.

If, moreover, $A$ is unital and the action is minimal, then the following are equivalent:
\begin{enumerate}
  \item $A \rtimes_\lambda F$ is MF;
  \item $A \rtimes_\lambda F$ is stably finite;
  \item $A$ admits an invariant trace.
\end{enumerate}
\end{corollary}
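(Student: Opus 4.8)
The plan is to reduce the corollary to the results already established for A$\mathcal{C}_0$-algebras. Concretely, once we know that $A \otimes \mathcal{Q}$ is an A$\mathcal{C}_0$-algebra, the first assertion is immediate from Theorem \ref{thm:ANCCW1CrossedProducts}, and in the unital case with a minimal action the equivalence of (1)--(3) is exactly Corollary \ref{cor:ANCCW1CrossedProducts} applied to $A$. So the entire problem reduces to the purely structural claim: if $A$ is an AH-algebra with the ideal property and torsion $\mathrm{K}_1$-group, then $A \otimes \mathcal{Q}$ is an A$\mathcal{C}_0$-algebra (in fact an AI-algebra).

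First I would dispose of the easy bookkeeping. Since an AH-algebra is a sequential direct limit $A = \varinjlim_i A_i$ of finite direct sums of corners of matrix algebras over compact metric spaces, and $\mathcal{Q} = \varinjlim_n M_{n!}$, the tensor product $A \otimes \mathcal{Q} = \varinjlim_{i,n} M_{n!}(A_i)$ is again a separable AH-algebra. It retains the ideal property because $\mathcal{Q}$ is simple, nuclear, and unital: the ideals of $A \otimes \mathcal{Q}$ are exactly the $I \otimes \mathcal{Q}$ with $I$ an ideal of $A$, and if $I$ is generated by the projections it contains then $I \otimes \mathcal{Q}$ is generated by the projections $p \otimes 1_{\mathcal{Q}}$. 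Finally, $\mathrm{K}_1(A \otimes \mathcal{Q}) \cong \mathrm{K}_1(A) \otimes \mathbb{Q}$ (by the Künneth formula, or by continuity of K-theory along the multiplication-by-$n$ connecting maps), and this vanishes because $\mathrm{K}_1(A)$ is torsion. Thus $A \otimes \mathcal{Q}$ is a separable AH-algebra with the ideal property and trivial $\mathrm{K}_1$-group.

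The real obstacle is to deduce from this that $A \otimes \mathcal{Q}$ is an A$\mathcal{C}_0$-algebra, and here I would appeal, essentially as a black box, to the structure and classification theory for AH-algebras with the ideal property. By reduction theorems in the spirit of Gong--Jiang--Li, a separable AH-algebra with the ideal property is a direct limit of matrix algebras over a short list of low-dimensional complexes (points, intervals, circles, and a few $2$- and $3$-dimensional complexes); feeding in the vanishing of $\mathrm{K}_1(A \otimes \mathcal{Q})$ and the resulting divisibility of its K-theory -- together with Robert's classification of A$\mathcal{C}_0$-algebras by the Cuntz semigroup (Theorem \ref{thm:CiupercaElliottRobert}) and the accompanying range-of-invariant results -- one can absorb or eliminate the circle and higher-dimensional building blocks and rewrite $A \otimes \mathcal{Q}$ as an A$\mathcal{C}_0$-algebra (indeed an AI-algebra). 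The difficulty here is entirely in locating the precise forms of these deep results and matching their hypotheses; no new estimates or constructions are introduced. Granting this, the first assertion follows from Theorem \ref{thm:ANCCW1CrossedProducts} and the second from Corollary \ref{cor:ANCCW1CrossedProducts}; alternatively, the equivalence of (1)--(3) can be re-derived as in the proof of Corollary \ref{cor:ANCCW1CrossedProducts}, using that $\mathcal{Q}_\omega$ is stably finite, that a stably finite unital exact C$^*$-algebra admits a trace whose restriction to $A$ is $F$-invariant, and that minimality forces such an invariant trace to be faithful, so that $\tau \circ \mathbb{E}$ is a faithful trace on $A \rtimes_\lambda F$ which is MF by the first assertion, whence $A \rtimes_\lambda F$ is MF.
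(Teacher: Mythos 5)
Your overall reduction is exactly the paper's: everything hinges on showing that $A \otimes \mathcal{Q}$ is an A$\mathcal{C}_0$-algebra (in fact an AI-algebra), after which the first assertion is Theorem \ref{thm:ANCCW1CrossedProducts} and the equivalence of (1)--(3) is Corollary \ref{cor:ANCCW1CrossedProducts}. Your bookkeeping is also correct and consistent with what the paper implicitly uses: $A \otimes \mathcal{Q}$ is again an AH-algebra with the ideal property, and $\mathrm{K}_*(A \otimes \mathcal{Q}) \cong \mathrm{K}_*(A) \otimes \mathbb{Q}$ is torsion free with $\mathrm{K}_1(A \otimes \mathcal{Q}) = 0$ since $\mathrm{K}_1(A)$ is torsion.

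However, the structural step you leave as a black box is precisely the content of the proof, and the mechanism you sketch for it does not work as described. The paper's argument consists of two specific citations: by the main result of Gong--Jiang--Li--Pasnicu \cite{GongJiangLiPaniscu:AHimpliesAT}, an AH-algebra with the ideal property and torsion-free $\mathrm{K}$-theory (such as $A \otimes \mathcal{Q}$) is an A$\mathbb{T}$-algebra; then Theorem 2.5 of Thomsen \cite{Thomsen:ATimpliesAI} shows that an A$\mathbb{T}$-algebra with trivial $\mathrm{K}_1$-group is an AI-algebra, hence an A$\mathcal{C}_0$-algebra. Your proposed substitute for the second step --- eliminating the circle and higher-dimensional building blocks via Robert's Cu-classification ``and the accompanying range-of-invariant results'' --- is circular at the decisive point: Theorem \ref{thm:CiupercaElliottRobert}, and the isomorphism theorem of \cite{Robert:NCCW1} behind it, apply only to algebras already known to lie in the classifiable class (inductive limits of one-dimensional NCCW complexes with trivial $\mathrm{K}_1$ of the building blocks), so they cannot be used to recognize $A \otimes \mathcal{Q}$ as a member of that class; nor does the paper have at its disposal a range-of-invariant theorem for non-simple A$\mathcal{C}_0$-algebras that would realize $\mathrm{Cu}(A \otimes \mathcal{Q})$ by an AI-algebra. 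The missing ingredient is Thomsen's theorem (or an equivalent concrete approximation result), which passes from A$\mathbb{T}$ with trivial $\mathrm{K}_1$ to AI directly, with no appeal to classification. Your closing re-derivation of the equivalence of (1)--(3) in the minimal unital case is fine and matches the proof of Corollary \ref{cor:ANCCW1CrossedProducts}.
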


\begin{proof}
As $\mathrm{K}_*(A \otimes \mathcal{Q}) \cong \mathrm{K}_*(A) \otimes \mathbb{Q}$ is torsion free, $A \otimes \mathcal{Q}$ is an A$\mathbb{T}$-algebra by the main result of \cite{GongJiangLiPaniscu:AHimpliesAT}.  As $\mathrm{K}_1(A)$ is a torsion group, we have $\mathrm{K}_1(A \otimes \mathcal{Q}) = 0$.  Theorem 2.5 in \cite{Thomsen:ATimpliesAI} implies $A \otimes \mathcal{Q}$ is an AI-algebra and, in particular, is an A$\mathcal{C}_0$-algebra.  The result follows from Theorem \ref{thm:ANCCW1CrossedProducts} and Corollary \ref{cor:ANCCW1CrossedProducts}.
\end{proof}

\begin{corollary}\label{cor:SimpleNuclearCrossedProducts}
Suppose $A$ is separable, simple, unital C*-algebra satisfying the UCT such that $A \otimes \mathcal{Q}$ has finite nuclear dimension and suppose $F$ is a free group acting on $A$.  Assume either that the projection in $A$ separate traces on $A$ or that $\mathrm{K}_1(A)$ is a torsion group.

The following are equivalent.
\begin{enumerate}
  \item $A \rtimes_\lambda F$ is MF;
  \item $A \rtimes_\lambda F$ is stably finite;
  \item $A$ admits an invariant trace.
\end{enumerate}
Moreover, every trace on $A \rtimes_\lambda F$ is MF.
\end{corollary}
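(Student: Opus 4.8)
The plan is to use the recent classification theory to recognise the algebra $A \otimes \mathcal{Q}$ as an A$\mathcal{C}_0$-algebra (when $\mathrm{K}_1(A)$ is torsion) or as an AH-algebra of real rank zero (when projections separate traces), and then to invoke the crossed-product results already in hand: Theorem~\ref{thm:ANCCW1CrossedProducts}, Theorem~\ref{thm:RainoneSchafhauser}, and the argument of Corollary~\ref{cor:ANCCW1CrossedProducts}. Two reductions come first. Since $A$ is simple, the action of $F$ on $A$ is automatically minimal. And if $A$ is not stably finite, then some amplification $\mathbb{M}_n(A) \subseteq \mathbb{M}_n(A \rtimes_\lambda F)$ contains an infinite projection, so $A \rtimes_\lambda F$ is not stably finite and hence not MF, while any tracial state on $A \rtimes_\lambda F$ would restrict to a tracial --- and hence, by simplicity, faithful --- state on $A$, contradicting the presence of that infinite projection; thus $A \rtimes_\lambda F$ carries no trace, all of (1)--(3) fail, and ``every trace is MF'' holds vacuously. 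So I may assume $A$, and therefore $D := A \otimes \mathcal{Q}$, is stably finite.

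The algebra $D$ is separable, simple (as $A$ is simple and $\mathcal{Q}$ is simple and nuclear), unital, stably finite, and satisfies the UCT; since it has finite nuclear dimension it is nuclear and $\mathcal{Z}$-stable (Castillejos--Evington--Tikuisis--White--Winter). Moreover $\mathrm{K}_*(D) \cong \mathrm{K}_*(A) \otimes \mathbb{Q}$, and since $\mathcal{Q}$ is nuclear with a unique trace, restriction gives a bijection from the traces of $D$ onto the traces of $A$, with inverse $\tau \mapsto \tau \otimes \mathrm{tr}_{\mathcal{Q}}$. If $\mathrm{K}_1(A)$ is torsion, then $\mathrm{K}_1(D) = 0$, and by the classification of simple, separable, unital, nuclear, $\mathcal{Z}$-stable C$^*$-algebras satisfying the UCT together with the known range of the Elliott invariant over A$\mathcal{C}_0$-algebras, $D$ is an A$\mathcal{C}_0$-algebra; Theorem~\ref{thm:ANCCW1CrossedProducts} then shows every trace on $A \rtimes_\lambda F$ is MF.

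If instead the projections of $A$ separate its traces, then the image of $\mathrm{K}_0(A)$ in the continuous affine functions on the trace simplex of $A$ is a subgroup which separates points and contains the constant function $\mathbf{1}$; a $\mathbb{Q}$-linear subspace of $\mathrm{Aff}(T)$ that separates points and contains $\mathbf{1}$ is uniformly dense, by a routine Hahn--Banach argument using the barycentre map on $T$, so the image of $\mathrm{K}_0(D) = \mathrm{K}_0(A) \otimes \mathbb{Q}$ is dense in the affine functions on the trace simplex of $D$. By the characterisation of real rank zero for simple $\mathcal{Z}$-stable C$^*$-algebras, $D$ has real rank zero, and then the classification of such algebras (Elliott--Gong, together with the range of the corresponding invariant) shows $D$ is isomorphic to an AH-algebra of real rank zero. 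Applying Theorem~\ref{thm:RainoneSchafhauser} to the diagonal action of $F$ on $D$ shows every trace on $D \rtimes_\lambda F \cong (A \rtimes_\lambda F) \otimes \mathcal{Q}$ is MF; as every trace on $A \rtimes_\lambda F$ extends along $A \rtimes_\lambda F \hookrightarrow (A \rtimes_\lambda F) \otimes \mathcal{Q}$ via $\otimes\, \mathrm{tr}_{\mathcal{Q}}$, restricting the resulting $*$-homomorphism into $\mathcal{Q}_\omega$ back to $A \rtimes_\lambda F$ exhibits every trace on $A \rtimes_\lambda F$ as MF.

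In all cases every trace on $A \rtimes_\lambda F$ is MF, so it remains only to deduce $(1) \Leftrightarrow (2) \Leftrightarrow (3)$; here the argument is verbatim that of Corollary~\ref{cor:ANCCW1CrossedProducts}. The implication $(1) \Rightarrow (2)$ is automatic since $\mathcal{Q}_\omega$ is stably finite; $(2) \Rightarrow (3)$ holds because a stably finite crossed product admits a trace (Corollary~5.12 of \cite{Haagerup:Quasitraces}), which restricts to an $F$-invariant trace on $A$; and $(3) \Rightarrow (1)$ because, by minimality, an invariant trace $\tau$ on $A$ is faithful, so $\tau \circ \mathbb{E}$ is a faithful trace on $A \rtimes_\lambda F$, which is MF by the above, whence $A \rtimes_\lambda F$ is MF. The one genuine obstacle is the structural identification of $D = A \otimes \mathcal{Q}$ --- as an A$\mathcal{C}_0$-algebra in the torsion-$\mathrm{K}_1$ case, and as an AH-algebra of real rank zero in the projections-separate-traces case. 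Both rely on the full strength of the classification programme for simple nuclear $\mathcal{Z}$-stable C$^*$-algebras and on the associated range-of-invariant computations (and, in the second case, on the $\mathcal{Z}$-stable characterisation of real rank zero); everything else is routine bookkeeping with the results already established.
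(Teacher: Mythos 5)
Your argument is correct in substance, and it splits into one branch that matches the paper and one that does not. In the torsion-$\mathrm{K}_1$ case you do exactly what the paper does: identify $A \otimes \mathcal{Q}$ as an A$\mathcal{C}_0$-algebra by classification-plus-range-of-invariant and then invoke Theorem \ref{thm:ANCCW1CrossedProducts} and Corollary \ref{cor:ANCCW1CrossedProducts}; the paper simply pins the identification to precise citations (Corollary 13.47 of \cite{GongLinNiu:NCCW1} together with Corollary D of \cite{TikuisisWhiteWinter}), and you should do the same, since the quasidiagonality input of \cite{TikuisisWhiteWinter} is what places $A \otimes \mathcal{Q}$ in the classifiable stably finite class. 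In the projections-separate-traces case the paper does something much shorter: it quotes Theorem 1.2 of \cite{RainoneSchafhauser}, which is precisely that case, whereas you re-derive it by showing $A \otimes \mathcal{Q}$ has real rank zero (R{\o}rdam's characterization of real rank zero for simple, exact, finite, $\mathcal{Z}$-stable algebras via density of the image of $\mathrm{K}_0$ in $\mathrm{Aff}(T)$ --- your Hahn--Banach/barycentre argument for density is fine), identifying it as a real rank zero AH-algebra by classification, applying Theorem \ref{thm:RainoneSchafhauser} to the diagonal action, and transferring back along $x \mapsto x \otimes 1$ with the trace $\tau \otimes \mathrm{tr}_{\mathcal{Q}}$; this is correct (and is morally how the cited result is proved), but it trades a one-line citation for a second layer of heavy classification and range-of-invariant results that you only gesture at, so if you keep this route you must supply those references explicitly. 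Two smaller points in your favour: your explicit reduction to the stably finite case (no traces exist and (1)--(3) all fail when $A$ is not stably finite) tidies up a step the paper leaves implicit, and your observation that restriction is a bijection between traces of $A \otimes \mathcal{Q}$ and of $A$ is used correctly. One cosmetic slip: $A \otimes \mathcal{Q}$ is $\mathcal{Z}$-stable simply because it is $\mathcal{Q}$-stable; no appeal to finite nuclear dimension (or to Castillejos--Evington--Tikuisis--White--Winter, which goes in the other direction) is needed there.
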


\begin{proof}
If the projections in $A$ separate traces on $A$, this Theorem 1.2 in \cite{RainoneSchafhauser}.  If $\mathrm{K}_1(A)$ is a torsion group, then $A \otimes \mathcal{Q}$ is an A$\mathcal{C}_0$-algebra by Corollary 13.47 in \cite{GongLinNiu:NCCW1} and Corollary D in \cite{TikuisisWhiteWinter}.
\end{proof}

\begin{remark}
The assumption that $A$ satisfies the UCT and that $A \otimes \mathcal{Q}$ has finite nuclear dimension in the above corollary may both be automatic when $A$ is nuclear.  For the finite nuclear dimension assumption, this is known to be the case whenever the set of extremal traces on $A$ is weak*-compact (see \cite{BosaBrownSatoTikuisisWhiteWinter}).  There is a good chance the conditions on the Elliott invariant in Corollary \ref{cor:SimpleNuclearCrossedProducts} are not necessary.
\end{remark}

We end this paper with a few remarks on potential further applications of these techniques.  If one could produce a classification theorem for A$\mathbb{T}$-algebras or A$\mathcal{C}$-algebras (and morphisms between them), one could probably reduce to the setting of AI-algebras or A$\mathcal{C}_0$-algebras, respectively.  An argument of this form was used in \cite{RainoneSchafhauser} as a way of reducing the MF-crossed product problem from real rank zero A$\mathbb{T}$-algebras to AF-algebras.

There is a good chance that if $A$ is an A$\mathcal{C}_0$-algebra and $F$ is a free group acting on $A$ such that $A \rtimes_\lambda F$ is stably finite, then $A \rtimes_\lambda F$ is MF.  The arguments would carry through in this setting if one could produce a faithful $\mathrm{Cu}$-morphism $\mathrm{Cu}(A \rtimes_\lambda F) \rightarrow \mathrm{Cu}(\mathcal{Q}_\omega)$ whenever the crossed product is stably finite.  In particular, it would be enough to know the following.

\begin{conjecture}
If $A$ is a separable, stably finite C*-algebra, then there is a faithful $\mathrm{Cu}$-morphism $\mathrm{Cu}(A) \rightarrow \mathrm{Cu}(\mathcal{Q}_\omega)$.
\end{conjecture}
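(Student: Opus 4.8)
The plan is to reduce the conjecture to a question purely about traces --- whether a separable, stably finite C$^*$-algebra admits a faithful quasitrace --- and then to run the machinery of Section~\ref{sec:CuMFApproximations}. The key observation is that the $\mathrm{Cu}$-morphisms produced there are controlled by a single functional: in Lemma~\ref{lem:CuConnesEmbedding} the morphism $S \to M_1$ is literally $\sigma_0 \circ \mu$, and tracing through Theorem~\ref{thm:AbstractCuMFApproximations} (and Theorem~\ref{thm:ConcreteCuMFApproximations}) the resulting $\sigma : \mathrm{Cu}(A) \to \mathrm{Cu}(\mathcal{Q}_\omega)$ satisfies $\mathrm{Cu}(\mathrm{tr}_\omega) \circ \sigma = \mu$ for the state $\mu$ it was built from; hence $\sigma(x) = 0$ forces $\mu(x) = 0$. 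Thus if $\mu$ is a \emph{faithful} functional on $\mathrm{Cu}(A)$ --- meaning $\mu(x) = 0 \Rightarrow x = 0$ --- then $\sigma$ is a faithful $\mathrm{Cu}$-morphism. Since functionals on $\mathrm{Cu}(A)$ correspond, by the theory of \cite{ElliottRobertSantiago}, to lower semicontinuous $[0,\infty]$-valued $2$-quasitraces on $A$, and a $2$-quasitrace $\tau$ is faithful (in the sense $\tau(a) = 0 \Rightarrow a = 0$ for $a \geq 0$) exactly when its functional $\mu_\tau(\langle a \rangle) = \lim_n \tau(a^{1/n})$ is, the whole problem becomes: does every separable, stably finite C$^*$-algebra carry a faithful lower semicontinuous $2$-quasitrace? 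One first has to upgrade Theorem~\ref{thm:AbstractCuMFApproximations} slightly, from states on unital Cuntz semigroups to arbitrary functionals on $\mathrm{Cu}(A)$ --- equivalently to densely defined quasitraces, since $A$ need not be unital and its unitization need not be stably finite --- but this is routine: the only point needed is that $\mu(x) < \infty$ whenever $x \ll y$, which holds for $\mu = \mu_\tau$ because $x \ll y$ in $\mathrm{Cu}(A)$ is witnessed by an element of the Pedersen ideal, on which every lsc $2$-quasitrace is finite.

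The main obstacle is thus the existence of a faithful quasitrace --- a variant of the Blackadar--Handelman problem. Stable finiteness does supply \emph{some} quasitrace, by Haagerup's theorem (as already used in Corollary~\ref{cor:ANCCW1CrossedProducts}), and in the simple case any nonzero quasitrace is automatically faithful, so the difficulty lies entirely in patching quasitraces that are ``supported'' on different ideals into a single faithful one. Using separability one would try to exhaust $A$ by the null ideals of a countable family $(\tau_k)$ of $2$-quasitraces with $\bigcap_k \ker \tau_k = 0$ and set $\tau = \sum_k 2^{-k} \tau_k$; but it is not clear such a family of (densely defined) quasitraces can be arranged so that the sum is again a well-defined lsc $2$-quasitrace, and extending a quasitrace from a non-unital ideal to the whole algebra is itself delicate.

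Crucially, one cannot sidestep this by working with the family $(\tau_k)$ directly on Cuntz semigroups. Given $\mathrm{Cu}$-morphisms $\sigma_k : \mathrm{Cu}(A) \to \mathrm{Cu}(\mathcal{Q}_\omega)$ with $\mathrm{Cu}(\mathrm{tr}_\omega) \circ \sigma_k = \mu_{\tau_k}$, placed on mutually orthogonal copies of $\mathcal{Q}_\omega$ inside $\mathcal{Q}_\omega \otimes \mathcal{K}$ (and rescaled so that $\mathrm{Cu}(\mathrm{tr}_\omega) \circ \sigma_k = 2^{-k} \mu_{\tau_k}$), one would hope $x \mapsto \sup_N \sum_{k \leq N} \sigma_k(x)$ is the desired faithful morphism into $\mathrm{Cu}(\mathcal{Q}_\omega \otimes \mathcal{K}) \cong \mathrm{Cu}(\mathcal{Q}_\omega)$. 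It preserves $0$, $+$, $\leq$, and suprema of increasing sequences, and is faithful whenever the $\mu_{\tau_k}$ are jointly faithful; the trouble is that it need \emph{not} preserve $\ll$. An infinite sum of elements satisfying $\sigma_k(x) \ll \sigma_k(y)$ need not be compactly contained in the corresponding infinite sum of the $\sigma_k(y)$'s --- already in $[0,\infty]$ one has $t \ll t$ only when $t = 0$, whereas $x \ll y$ in $\mathrm{Cu}(A)$ routinely has $\mu(x) = \mu(y) > 0$ (for instance when $y = \langle 1 \rangle$). So a genuinely new idea seems to be required, either to produce a single faithful quasitrace or to assemble countably many morphisms without destroying compact containment; I expect this to be the heart of the matter, and presumably it is why the statement is left as a conjecture.
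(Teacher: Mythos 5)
You are trying to prove a statement that the paper itself does not prove: it is stated as a conjecture, and the only things the paper records about it are remarks (a negative answer would refute the Blackadar--Kirchberg problem, and the test case $C_0(0,1]\otimes\mathcal{O}_2$ is settled only by observing that the algebra is MF, not by Cuntz semigroup techniques). Your proposal honestly stops short of a proof, so there is no completed argument to certify; but the analysis you offer contains a genuine misstep worth flagging. Your central reduction --- that the conjecture ``becomes'' the question of whether every separable, stably finite C$^*$-algebra carries a faithful (densely defined) lower semicontinuous $2$-quasitrace --- is only a sufficient condition, not an equivalence, and as a proposed route it cannot work in general. The reason is that $\mathrm{tr}_\omega$ is not faithful on $\mathcal{Q}_\omega$ (a sequence of projections of rank one in larger and larger matrix blocks has norm $1$ but trace tending to $0$), so a faithful $\mathrm{Cu}$-morphism $\sigma$ need not, and in general cannot, arise by arranging $\mathrm{Cu}(\mathrm{tr}_\omega)\circ\sigma$ to be a faithful functional. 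The paper's own example makes this concrete: $\mathrm{Cu}(C_0(0,1]\otimes\mathcal{O}_2)$ is the lattice of open subsets of $(0,1]$ with addition given by union, so any functional $\mu$ satisfies $\mu(U)=2\mu(U)$ and takes values only in $\{0,\infty\}$; there is no densely finite faithful quasitrace at all (and your finiteness argument via the Pedersen ideal, which you need to run the Lemma \ref{lem:CuConnesEmbedding} machinery through $([0,\infty],\prec_1)$, rules out the extended-valued one). Yet the conjecture holds for this algebra, because it is MF and a faithful embedding into $\mathcal{Q}_\omega$ induces a faithful $\mathrm{Cu}$-morphism. So faithfulness of the morphism must be achieved by some mechanism other than composing with a single faithful state/functional, which is precisely why the problem is harder than a Blackadar--Handelman-type question about quasitraces.

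Your second observation --- that one cannot simply sum countably many trace-controlled morphisms on orthogonal copies inside $\mathcal{Q}_\omega\otimes\mathcal{K}$ because the supremum of partial sums can destroy compact containment --- is correct and is a fair articulation of one real obstruction. But note that even this framing inherits the flaw above: a jointly faithful countable family of densely defined quasitraces also need not exist (again $C_0(0,1]\otimes\mathcal{O}_2$), so the heart of the matter is not merely a patching problem for quasitraces. Any successful approach must exploit the structure of $\mathrm{Cu}(\mathcal{Q}_\omega)$ beyond what is visible through $\mathrm{Cu}(\mathrm{tr}_\omega)$, or avoid functionals altogether (as the MF-based argument does in the special case); this is consistent with the paper's remark that a purely Cuntz-semigroup-theoretic proof seems difficult, and with the statement being left open.
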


Note that if $A$ is MF, then a faithful *-homomorphism $A \rightarrow \mathcal{Q}_\omega$ induces a faithful $\mathrm{Cu}$-morphism $\mathrm{Cu}(A) \rightarrow \mathrm{Cu}(\mathcal{Q}_\omega)$.  Hence a negative answer to the above conjecture would provide a counter example to the Blackadar-Kirchberg problem.

An interesting special case is $A = C_0(0, 1] \otimes \mathcal{O}_2$.  The Cuntz semigroup $\mathrm{Cu}(A)$ consists of open subsets of $(0, 1]$ with addition given by union and order given by inclusion.  Voiculescu's homotopy invariance of quasidiagonality shows that $A$ is MF and hence $A$ satisfies the conjecture.  It seems difficult to prove the existence of a faithful $\mathrm{Cu}$-morphism using only Cuntz semigroup techniques, however.

\end{document}